\newcommand{\Hol}[1]{\mathop{Hol_{#1}}}
\newcommand{\M}[1]{\mathcal{M}_{0, #1}}
\newcommand{\Ms}[1]{\overline{\mathcal{M}}_{0, #1}}
\newcommand{\set}[1]{\underline{\mathbf{#1}}}
\newcommand{\PP}{\mathbb{P}^1}
\newcommand{\Gm}{\mathbb{G}_m}
\newcommand{\Gms}{\Gm\setminus\{1\}}
\newcommand{\EE}{\mathcal{E}}
\newcommand{\FF}{\mathcal{F}}
\newcommand{\LL}{\mathcal{L}}
\newcommand{\con}[2]{#1 \ast^! #2}
\newcommand{\jj}{\mathop{j_{\infty!}j_{0*}}}
\newcommand{\nea}[2]{\mathop{\psi_{#1}(#2)}}
\newcommand{\van}[2]{\mathop{\phi_{#1}(#2)}}
\newcommand{\Nea}[2]{\mathop{\Psi_{#1}(#2)}}
\newcommand{\Van}[2]{\mathop{\Phi_{#1}(#2)}}
\newcommand{\Shs}{\mathop{\mathop{\mathrm{Perv}}}(\Gm , 1)}
\newcommand{\Ho}[1]{\sideset{}{^{#1}}{\operatorname{\mathnormal{H}}}}
\newcommand{\Rde}{\operatorname{\mathbf{R}}}
\newcommand{\var}{var}
\newcommand{\ph}[1]{\varphi_{#1}}
\newcommand{\R}{\mathop{{\mathbf{R}}}}
\newcommand{\Rm}{\mathop{{\tau^{\le 0}\operatorname{\mathbf{R}}}}}
\theoremstyle{plain}
\newtheorem{prop}{Proposition}
\newtheorem{theorem}{Theorem}
\newtheorem*{theorem*}{Theorem}
\theoremstyle{definition}
\theoremstyle{remark}
\newtheorem{rem}{Remark}
\begin{document}

\title[Convolution]{Multiplicative convolution and double shuffle relations: convolution}
\author{Nikita Markarian}

\begin{abstract}
This is the first of two parts of a project devoted to 
a geometric interpretation of the Deligne--Terasoma approach
to regularized double shuffle relations. The central fact of this approach
is the isomorphism between vanishing cycles of multiplicative convolution of 
certain perverse sheaves and the tensor product of vanishing cycles,
which may be written in two different ways.
 These isomorphisms depend on a choice of a functorial
isomorphism $\varphi$ between vanishing cycles of a perverse sheaf on $\mathbb{C}^*$ and 
 cohomology of its certain extension on $\mathbb{P}^1$. The isomorphism chosen in the present paper
guarantees compatibilities with the  isomorphisms. 

In the second part of the project,
we will study other choices of $\varphi$. We will see that its compatibilities with
convolution imply regularized double shuffle relations. In particular,
associator relations imply them.
\end{abstract}

\email{nikita.markarian@gmail.com}

\date{}

\address{UMR 7501, Université de Strasbourg,
7 rue René-Descartes,
67084 Strasbourg Cedex, France}
\maketitle
%

\section*{Introduction}

Multiple zeta values are periods of iterated integrals 
and thereby are coefficients of the Drinfeld associator.
As such, they obey the set of relation implied by
the relations the associator satisfies.
Another set of relations, regularized double shuffle relations,  was introduced in \cite{IKZ}, and nicely formalized in \cite{Racinet}.  

The unfinished project \cite{DLet, DTerICM, DTerPr} of
Deligne and Terasoma, developed later in \cite{Enriquez2021,Enriquez2022,Enriquez2023},
connects double shuffle relations with 
convolution of perverse sheaves. The main aim of this project
was to show that associator relations imply regularized double shuffle
relation. Although since then this fact was proved in different ways, see \cite{Furusho2011, HiroseSato, Furusho2022},
the geometric approach is still of great interest and has applications we will discuss in the second part of the paper.

The present project grew up
from attempts to interpret the Deligne--Terasoma approach.
Note that although our approaches have the same main characters,
perverse sheaves and convolution, there are some differences. 
Firstly, the central role in the Deligne--Terasoma approach
is played by the quotient category as in Theorem \ref{Quotient} below,
we do not use this category at all. Another subject, playing an important
role there, Hodge structure and ``fake Hodge structure'' also
do not appear in our approach. It would be very interesting
to fill these gaps.

In the first part of the paper presented below, we discuss only 
interaction between
multiplicative convolution of perverse sheaves and vanishing cycles.
A paradigmatic example of this relation is the 
Thom--Sebastiani theorem \cite{Sebastiani1971, Massey2001},
which states that vanishing cycles of the direct sum of singularities
is equal to tensor product of the ones of summands.
The initial proof (\cite{Sebastiani1971}) is transcendental in nature, essentially they
show that the Milnor fiber of the sum is homeomorphic to the join
of Milnor fibers of the summands.

The vanishing cycles of singularities are vanishing cycles of the pushforward
of the constant sheaf and the direct sum of singularities corresponds essentially to the 
additive convolution. In contrast, we, following \cite{DLet}, consider the category
of perverse sheaves on $\Gm$ smooth outside $1$
and their multiplicative convolution. 

On this category two functors are defined:  vanishing cycles at $1$ and the $0$th cohomology of the extension $\Ho{0}(\PP,\, \jj -)$. They are isomorphic, we define
an isomorphism $\phi_I$ between them. This isomorphism
is given by shrinking on the interval $[0,1]$ and is transcendental in nature.
It means that it does not respect arithmetic structures such as Hodge structure.
This fact could be a link between our approach and the Deligne--Terasoma one.

Both functors are tensor with respect to the multiplicative convolution.

For vanishing cycles, the isomorphism between vanishing cycles of convolution
and the product of vanishing cycles of factors is given by the local calculation
of the additive convolution of the Verdier specializations of perverse sheaves.
Note that
Verdier specialization is another possible link between Deligne--Terasoma and  our approaches. Indeed, de Rham and Betti fiber functors
in \cite{DGal}
are defined by means of the Verdier specializations of a vector bundle
at infinite points.

The isomorphism between $\Ho{0}(\PP,\, \jj -)$ of convolution 
and tensor product of $\Ho{0}(\PP,\, \jj -)$ of factors is essentially
the K\"unneth formula. Alternatively, one may present this isomorphism
in terms of nearby cycles at infinity, this construction 
reminds the join in the proof of the  Thom--Sebastiani theorem 
mentioned above.

Isomorphism $\phi_I$ connects the two tensor products above.
The main result of the text, Theorem \ref{Theorem}, states that this isomorphism
is compatible with isomorphisms given by $\phi_I$ of factors.
This follows from the existence of the pentagon in $\Ms{5}$ 
restricted by intervals, which are copies of the interval $I$,
used to define  isomorphism $\phi_I$.

In the second part of the text, we will replace the straight line $I$ with a pro-unipotent path.
It is not surprising that the pentagon equation on this path
implies analogous compatibilities  of the corresponding isomorphism
of functor with the convolution product. This compatibility
may be reformulated as regularized shuffle relations between
the coefficients of the element of the pro-unipotent
completion of the  fundamental group of $\M{4}$ corresponding to the path.

%
%
%
%


\medskip
{\bf Acknowledgments.} I am grateful to B.~Enriquez, M.~Finkelberg H.~Furusho and M.~Kapranov for fruitful and deep discussions and  support. 
I would like to thank the Max Planck Institute for Mathematics and IHES for hospitality and perfect
        work conditions. The project is 
supported by funding of the program PAUSE and of the ITI IRMIA++.

\subsection*{Notations}
Everything is over $\mathbb{C}$, althoug some statement are true in a more general situation.

$\PP$ is the projective line, $\Gm=\PP\setminus\{0, \infty\}$ is the multiplicative group.

$\Shs$ is the category of perverse sheaves on $\Gm$ smooth outside 1.

We omit annotation of derived functors: $j_*$ means $\Rde j_*$,
$i^!$ means    $\Rde i^!$ and so on.

For a map $f\colon X \to  D \subset \mathbb{C}$
 from a  complex variety X to a disc
$D \subset \mathbb{C}$ with the central fiber  $X_0 = f^{-1}(0)$, the inclusion map $i\colon X_0 \to X$, and a complex $\FF$ of constructible
sheaves $\FF^\bullet$ on $X$, denote by $\phi \FF^\bullet$ and 
$\psi \FF^\bullet$ the complex of vanishing and nearby cycles on $X_0$.
Functors $\van{f}{-}[-1]$ and $\nea{f}{-}[-1]$ conserve the category of perverse sheaves.
They form  
Milnor triangles:
\begin{equation}
\begin{tikzcd}[cramped]
\nea{f}{\FF^\bullet}[-1]  \arrow[r, "can"]
& \van{f}{\FF^\bullet}[-1]  \arrow[r] 
& i^*\FF^\bullet \arrow[r, "+1"] 
& {}
\end{tikzcd}
\label{can}
\end{equation}

\begin{equation}
\begin{tikzcd}[cramped]
i^! \FF^\bullet \arrow[r]
& \van{f}{\FF^\bullet}[-1]  \arrow[r,"var"]
& \nea{f}{\FF^\bullet}[-1] \arrow[r, "+1"] 
& {}
\end{tikzcd}
\label{var}
\end{equation}

We denote  cohomologies of vanishing and nearby cycles  at a point   by capital letters:
\begin{equation}
\Van{f}{\FF}=\Ho{0}(\van{f}{\FF^\bullet}[-1]) \quad \mbox{and} \quad \Nea{f}{\FF}=\Ho{0}(\nea{f}{\FF^\bullet}[-1] )
\end{equation}

%
%
%

Denote by $i_a\colon pt \to \PP$ the embedding of point $a$ in $\PP$
for $a= 0,\infty, 1$.

\section{Deligne fiber functor}
\label{deligne}

\subsection{Vanishing cycles}
\label{van1}

Firstly, prove a general useful result.
Let $I$ be the real interval $[0, 1]$ in $\PP$ not containing $\infty$, let
$i_I\colon I\to \PP$ be the closed embedding.
Denote by $j_\infty$ is the inclusion of $\mathbb{A}^1$ into $\PP$.
 
\begin{prop}[Shrinking]
For a complex of sheaves $\FF^\bullet$ on  $\mathbb{A}^1$ constructible with respect to the
stratification $(\{0,1\}, \, \mathbb{A}^1)$,  composition of the excision isomorphism and
the   forgetful map
give an isomorphism
\begin{equation}
\begin{tikzcd}[cramped, sep=small]
H^*(I, \,i_I^! \FF^\bullet)\arrow[r,equal]
& H^*_I(\PP,\, j_{\infty !} \FF^\bullet) \arrow[r]
& \Ho{*}(\PP,\, j_{\infty!}\FF^\bullet)
\end{tikzcd}
\end{equation}
\label{basic}
\end{prop}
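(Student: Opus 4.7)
The plan is to split the claim into two parts: the excision identification, which is formal, and the isomorphy of the forgetful map, which reduces to a topological vanishing on the complement of $I$.

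The first equality $H^*(I, i_I^!\FF^\bullet) = H^*_I(\PP, j_{\infty!}\FF^\bullet)$ is tautological. Since $I \subset \mathbb{A}^1$, one has $i_I^! j_{\infty!}\FF^\bullet = i_I^!\FF^\bullet$, and $H^*(I, i_I^!\mathcal{H}) = H^*_I(\PP, \mathcal{H})$ by the very definition of local cohomology. What remains is to show that the forgetful map $H^*_I(\PP, j_{\infty!}\FF^\bullet) \to H^*(\PP, j_{\infty!}\FF^\bullet)$ is an isomorphism. Placing it in the long exact sequence for the open--closed decomposition of $\PP$ into $I$ and $V := \PP \setminus I$, this reduces to the vanishing
\[
H^*(V, j_{\infty!}\FF^\bullet|_V) = 0.
\]

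To prove this vanishing, note that $V$ contains $\infty$ while $(j_{\infty!}\FF^\bullet)_\infty = 0$. Hence $j_{\infty!}\FF^\bullet|_V = k_!\mathcal{G}$, where $k\colon W \hookrightarrow V$ is the inclusion of $W := \mathbb{A}^1 \setminus I$ and $\mathcal{G} := \FF^\bullet|_W$ is locally constant (since $W \cap \{0,1\} = \emptyset$). Apply $H^*(V, -)$ to the open--closed distinguished triangle
\[
k_!\mathcal{G} \longrightarrow k_*\mathcal{G} \longrightarrow i_{\infty *}\, i_\infty^*(k_*\mathcal{G}) \xrightarrow{+1}
\]
on $V$, obtaining a triangle whose middle term is $H^*(W, \mathcal{G})$ and whose right term is the stalk $i_\infty^*(k_*\mathcal{G})$, i.e.\ the cohomology of $\mathcal{G}$ on a small punctured neighborhood of $\infty$.

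The key step, and the main obstacle, is the topological assertion that the resulting map from $H^*(W, \mathcal{G})$ to the stalk is an isomorphism. The input is that $V = \PP \setminus I$ is an open disc in the sphere $\PP$ with $\infty$ in its interior, so that $W = V \setminus \{\infty\}$ deformation retracts onto a small circle around $\infty$; a small punctured neighborhood of $\infty$ in $V$ retracts onto the same circle. Since $\mathcal{G}$ is locally constant, the restriction map between the two is an isomorphism of local-system cohomologies, and the triangle forces $H^*(V, k_!\mathcal{G}) = 0$, as required. The contractibility of $\PP \setminus I$ and the homotopy equivalence of $W$ with a loop around $\infty$ are the geometric content of the word ``shrinking''.
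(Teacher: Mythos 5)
Your proof is correct and follows essentially the same route as the paper: the same closed--open decomposition of $\PP$ along $I$, reducing the claim to the vanishing of the cohomology of the $!$-extension of a local system from the punctured disc $\PP\setminus(I\cup\{\infty\})$ to the disc $\PP\setminus I$. The only difference is that you spell out the step the paper leaves as ``one may see'' (the vanishing of $H^*(\mathbb{A}^1, j_!\LL)$ for a local system $\LL$ on $\Gm$), via the retraction of the punctured disc onto a circle around $\infty$; this is a welcome elaboration, not a divergence.
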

\begin{proof}
Let 
$j_{\setminus I}\colon \PP\setminus I\to \PP$ be the open embedding. Consider the exact triangle
\begin{equation}
\begin{tikzcd}[cramped, sep=small]
{i_I}_*i_I^! \FF^\bullet \arrow[r, equal]
&{i_I}_*i_I^! j_{\infty!}\FF^\bullet \arrow[r]
& j_{\infty!}\FF^\bullet \arrow[r] 
&  j_{\setminus I *} {j_{\setminus I}}^*j_{\infty!} \FF^\bullet \arrow[r, "+1"] 
& {}
\end{tikzcd}
\label{retract}
\end{equation}
The pair of topological space $(\PP\setminus I,\, \PP\setminus \{I, \infty\}) $ is homeomorphic to the pair $(\mathbb{A}^1,\, \Gm)$.
The restriction of  $\FF^\bullet$  
on $\PP\setminus \{I, \infty\}$ is a complex of local systems.
To show that cohomology of its $!$-extension on the bigger space
vanishes, consider the corresponding complex of local systems on $\Gm$.
One may see that for a 
local system $\LL$ on  $\Gm$, cohomology  $\Ho{*}(\mathbb{A}^1,\, j_{\infty!} \LL)$ vanishes, which follows, that cohomology of a complex of local systems vanishes as well.
It follows that cohomology of the last term in (\ref{retract})
vanish.
Thus, cohomologies of first two terms in (\ref{retract})
are isomorphic.
\end{proof}

Let $j\colon \Gm \to \PP$ be the embedding of the multiplicative group to 
the projective line.
Factor $j$ as $j_\infty \circ j_0$, where $j_0$ is the
inclusion of $\Gm$ into $\mathbb{A}^1$, and $j_\infty$ is the inclusion of $\mathbb{A}^1$ into $\PP$.

Let $\FF$ be a perverse sheaf from $\Shs$.
Denote by $(0, 1]\subset \Gm$ the intersection of $I$ and $\Gm$.
Combining the canonical isomorphism of local cohomologies
 $$H^\bullet_{(0,1]}(\Gm,\, \FF) \simeq H^\bullet_{(0,1]}(\mathbb{A}^1,\, j_{0*}\FF) $$
with the excision isomorphism 
 $$ H^\bullet_{(0,1]}(\mathbb{A}^1,\, j_{0*}\FF)\simeq H^\bullet_{(0,1]}(\PP,\, \jj\FF) $$
 followed by 
 the  forgetful map   to $H^\bullet(\PP,\, \jj\FF)$ gives a map from
$H^0_{(0,1]}(\Gm,\, \FF)$ to  $H^0(\PP,\, \jj\FF)$.

\begin{prop}
For a  perverse sheaf $\FF\in \Shs$  
cohomology $H^0_{(0,1]}(\Gm,\, \FF)$ is naturally isomorphic to vanishing cycles $\Van{{1-z}}{\FF} $ of $\FF $ at $1$ 
and 
the  map defined above gives an isomorphism
\begin{equation}
\begin{tikzcd}[cramped, sep=small] \ph{I} \colon \Van{{1-z}}{\FF} \arrow[r] &  \Ho{0}(\PP,\, \jj \FF) \end{tikzcd}
\label{equality}
\end{equation} 
 For $i\ne 0$ cohomologies $\Ho{i}(\PP,\, \jj \FF)$ vanish.
\label{phi}
\end{prop}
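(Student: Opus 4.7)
The plan is to construct $\ph{I}$ by identifying both $\Van{1-z}{\FF}$ and $\Ho{0}(\PP,\jj\FF)$ with the local cohomology $H^0_{(0,1]}(\Gm,\FF)$, while simultaneously establishing concentration in degree $0$, which yields the vanishing $\Ho{i}(\PP,\jj\FF)=0$ for $i\ne 0$. First I would apply Shrinking (Proposition~\ref{basic}) to $\FF^\bullet=j_{0*}\FF$, which is constructible with respect to the stratification $(\{0,1\},\mathbb{A}^1)$ since $\FF$ is smooth on $\Gm\setminus\{1\}$; this gives $H^*(I, i_I^!\jj\FF)\simeq H^*(\PP,\jj\FF)$. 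Then, using the closed--open decomposition $I=\{0\}\sqcup(0,1]$ with $\{0\}$ closed, I would extract the triangle
\[
i_0^!\jj\FF \to H^*(I, i_I^!\jj\FF) \to H^*_{(0,1]}(\Gm,\FF),
\]
where the third term is identified by excision along $\Gm\subset\PP$. Since $j_0$ is an open embedding with closed complement $\{0\}$, the standard recollement identity $i_0^!j_{0*}=0$ makes the first term vanish, yielding $H^*(\PP,\jj\FF)\simeq H^*_{(0,1]}(\Gm,\FF)$.

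It remains to identify $H^*_{(0,1]}(\Gm,\FF)$ with $\Van{1-z}{\FF}$ concentrated in degree $0$. I would split $(0,1]=(0,1-\delta]\cup[1-\delta,1]$ for small $\delta>0$. On the tail $(0,1-\delta]$, where $\FF$ is a shifted local system, the open inclusion $\Gm\setminus(0,1-\delta]\hookrightarrow\Gm$ is a stratified homotopy equivalence (both sides retract onto a large circle $|z|=R$, and the singular point $1$ stays fixed), so pullback is an iso on $\FF$-hypercohomology and therefore $H^*_{(0,1-\delta]}(\Gm,\FF)=0$. The remaining $H^*_{[1-\delta,1]}(\Gm,\FF)$ is the standard Milnor-fibre description of vanishing cycles: the real segment $[1-\delta,1]$ is a thimble along which $1-z$ decreases monotonically from $\delta$ to $0$, and this local cohomology equals the relative cohomology $H^*(B_\epsilon(1),\{1-\delta\};\FF)$, which by definition computes $\Van{1-z}{\FF}$ and is concentrated in degree $0$ by the perversity of $\FF$.

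The main obstacle I foresee is this last step, especially the degree concentration. While the thimble description of vanishing cycles is classical, deducing concentration in degree $0$ requires the perversity of $\FF$ together with the Milnor triangles (\ref{can}) and (\ref{var}) at $1$, which pin down the degrees of $i_1^*\FF$ and $i_1^!\FF$ so that $\Van{1-z}{\FF}$ is a single vector space. Once in place, concatenating the three isomorphisms produces $\ph{I}$ and the vanishing of $\Ho{i}(\PP,\jj\FF)$ for $i\ne 0$.
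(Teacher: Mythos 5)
Your architecture is the same as the paper's: apply the Shrinking proposition to $j_{0*}\FF$, use the recollement identity $i_0^!j_{0*}=0$ to reduce $H^*(I,i_I^!\jj\FF)$ to $H^*_{(0,1]}(\Gm,\FF)$, and then identify the latter with $\Van{1-z}{\FF}$, with the degree concentration (hence the vanishing of $\Ho{i}(\PP,\jj\FF)$ for $i\neq0$) coming from perversity. The paper outsources this last identification to Galligo--Granger--Maisonobe and to the isomorphism of the triangle (\ref{complex}) with the Milnor triangle (\ref{var}); you attempt to prove it directly, and that is where your argument as written breaks.

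The pieces $(0,1-\delta]$ and $[1-\delta,1]$ overlap at $1-\delta$, so they do not form a closed--open decomposition and give no exact triangle; and if you repair this by taking the closed interval $[1-\delta,1]$ with open complement $(0,1-\delta)$, the resulting terms are wrong: the tail contributes the costalk at $1-\delta$ (for $\FF$ a shifted local system $\LL[1]$ near the interval, $R\Gamma\bigl((0,1-\delta),i^!\LL[1]\bigr)\cong\LL_{1-\delta}\neq0$), and $H^*_{[1-\delta,1]}$ acquires a matching spurious summand from $i_{1-\delta}^!$. Concretely, for $\FF=\mathbb{C}_{\Gm}[1]$ one has $\Van{1-z}{\FF}=0$, while $H^*_{[1-\delta,1]}(\Gm,\mathbb{C}[1])\cong\mathbb{C}$ in degree $1$, so the closed interval does \emph{not} compute $H^*(B_\epsilon(1),\{1-\delta\};\FF)$. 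The correct cut puts the smooth endpoint in the open stratum: take $(0,1-\delta]$ closed in $(0,1]$ (its local cohomology does vanish, but by locality of $R\Gamma_{(0,1-\delta]}$ in a contractible strip avoiding $1$ --- not by retracting $\Gm$ onto a circle $|z|=R$, which misses the singular point $1$) and the half-open interval $(1-\delta,1]$ as the remaining open piece. Then $B_\epsilon(1)\setminus(1-\epsilon,1]$ is a cut disk not containing $1$, hence contractible with $\FF$ a shifted local system on it, so $R\Gamma_{(1-\epsilon,1]}(B_\epsilon(1),\FF)\cong\mathrm{cone}\bigl(i_1^*\FF\to\FF_{1-\epsilon}\bigr)[-1]$, which is the Milnor-fibre description of $\van{1-z}{\FF}[-1]$; perversity then yields the concentration in degree $0$. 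With this correction your proof closes and coincides with the paper's, with the cited identification made explicit.
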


\begin{proof}
Apply Proposition \ref{basic} to the perverse sheaf $j_{0!}\FF$.
As $H^0_{(0,1]}(\Gm,\, \FF)$ is equal to $H^0_I(\mathbb{A}^1,\, j_{0!}\FF)$, it gives  isomorphism (\ref{equality}). An isomorphism 
between $H^0_{(0,1]}(\Gm,\, \FF)$ and $\Van{{1-z}}{\FF}$
is established in \cite{Galligo1985} and may be taken as a definition of vanishing cycles. Moreover, it is shown there, that
 the exact triangle
\begin{equation}
\begin{tikzcd}[cramped]
H^\bullet(i_1^! \FF)\arrow[r]
& H^\bullet_{(0, 1]}(\Gm,\,  \FF) \arrow[r] 
& H^\bullet_{(0, 1)}(\Gm,\, \FF) \arrow[r, "+1"]
& {}
\end{tikzcd}
\label{complex}
\end{equation}
associated with the pair $(0, 1)\subset (0, 1]$ in $\Gm$   is isomorphic to the cohomology of the Milnor triangle (\ref{var}) for $\FF$, see also
 \cite[Remark 9.5]{KapSch}.
\end{proof}

Above we constructed not only the class in $\Ho{1}(\PP,\, \jj \FF[-1])$,
but the corresponding extension. This extension obeys the
natural  condition, which essentially specify in which direction 
interval $I$ meets point $1$, see Proposition \ref{frame} below.

\begin{rem}
Following \cite[Section 3]{Katz+1991}, one may show that the dimension of
$\Van{{1-z}}{\FF}$ coincides with the one of $\Ho{0}(\PP,\, \jj \FF)$ by means of 
the exactness properties of push-forwards along affine morphisms of perverse sheaves, see
\cite[4.1]{BBD}. Indeed, they imply that 
$\Ho{>0}(\PP,\, \jj \FF)=\Ho{>0}(\PP\setminus\{0\},\, j_!^\infty\FF)=0$
and also $\Ho{<0}(\PP,\, \jj \FF)=0$, by the Verdier duality. Thus, 
dimension of  $\Ho{0}(\PP,\, \allowbreak \jj \FF)$ equals to the Euler characteristic
of the cohomology, which can be shown to be equal to the dimension of $\Van{{1-z}}{\FF}$. This method does not suggest an explicit isomorphism between these spaces.
\label{purite} 
\end{rem}

\subsection{Nearby cycles}
For  a perverse sheaf on $\PP$,
which is an extension of 
a shifted local system from  $\PP\setminus \{0, \infty, 1\}$
by $*$, $*$, and $!$ in some order, Proposition \ref{phi} implies an isomorphism
between vanishing cycles at points, to which the local system is extended by $*$.
These vanishing cycles are isomorphic to nearby cycles,
the isomorphism is given by $var$ from (\ref{var}).
The following proposition reveals the nature of this isomorphism.

Denote by $j_1\colon \Gm\setminus\{1\} \to \Gm$ the open embedding
and by 
\begin{equation}
r\colon z\mapsto 1-z 
\label{reflexion}
\end{equation}
the automorphism of $\PP$.

\begin{prop}
Let $\LL$ be a local system on $\Gm\setminus \{1\}$.
The composition of isomorphisms
\begin{equation}
\begin{tikzcd}[cramped]
\Nea{{1-z}}{j_{1*}\LL[1]}
& \Van{{1-z}}{j_{1*}\LL[1]} \arrow[r, "\ph{I}"] \arrow[l, "var"']
& \Ho{0}(\PP,\, \mathop{j_{\infty!}j_{0*} j_{1*}}\LL[1])\arrow[d,"r"] \\
 \Nea{{z}}{j_{1*}\LL[1]}
& \Van{{z}}{j_{1*}\LL[1]}  \arrow[l, "var"'] \arrow[r, "\ph{I}"]
& \Ho{0}(\PP,\, \mathop{j_{\infty!}j_{1*} j_{0*}}\LL[1])
\end{tikzcd}
\end{equation}
is the holonomy along $I$.
\label{hol}
\end{prop}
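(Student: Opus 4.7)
The plan is to identify both nearby cycle groups with stalks of $\LL$ at interior points of $I$ near the endpoints, and to show that each occurrence of $\ph{I}$ realizes the cohomology $\Ho{0}(\PP, \ldots)$ as parallel transport of such a stalk inward along $I$ to a common interior point. The composition, mediated by the reflection $r$ which fixes the midpoint of $I$, then becomes parallel transport along the full length of $I$---by definition, the holonomy.

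Since $i_1^!(j_{1*}\LL[1]) = 0$ and $i_0^!(j_{0*}j_{1*}\LL[1]) = 0$, the Milnor triangle (\ref{var}) makes $\var$ an isomorphism in both rows; the nearby cycles $\Nea{1-z}{j_{1*}\LL[1]}$ and $\Nea{z}{j_{1*}\LL[1]}$ are then canonically identified with the stalks of $\LL$ at $1-\epsilon$ and $\epsilon$ respectively, via the standard interpretation of nearby cycles as limits along the positive real direction of the defining function. Applying Proposition \ref{basic} to $\FF' := j_{0*}j_{1*}\LL[1]$ on $\mathbb{A}^1$, the map $\ph{I}$ factors through $H^0(I, i_I^!\FF')$. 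Since $\FF'|_{(0,1)} = \LL[1]$ is a shifted local system on the contractible open interval $(0,1) \subset I$, the forgetful map from $H^0_{(0,1]}(\Gm, j_{1*}\LL[1])$ to $\Ho{0}(\PP, j_{\infty!}\FF')$ is realized as parallel transport along $I$ from $1-\epsilon$ to any chosen interior point of $(0,1)$. An entirely analogous analysis of the bottom row identifies $\Ho{0}(\PP, j_{\infty!}j_{1*}j_{0*}\LL[1])$ with a fiber of $\LL$ via parallel transport from $\epsilon$ inward.

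The reflection $r\colon z \mapsto 1-z$ fixes $I$ setwise and exchanges its endpoints, inducing an isomorphism between the two cohomology groups that is the identity on the stalk of $\LL$ at the unique $r$-fixed interior point $1/2$. Therefore the composite in the proposition sends the stalk at $1-\epsilon$ to the stalk at $1/2$ by inward parallel transport, is preserved by $r$ at $1/2$, and then emerges at the stalk at $\epsilon$ via parallel transport outward along the other half of $I$; the net effect is parallel transport along all of $I$, i.e.\ the holonomy. The main obstacle I anticipate is verifying the $r$-compatibility of the two shrinking identifications at the midpoint---essentially checking that the forgetful map from local cohomology to cohomology on $\PP$, traced through the shrinking isomorphism, identifies the cohomology group with a fiber of $\LL$ in the interior in a way that is independent of which endpoint of $I$ was used, up to the parallel transport predicted above.
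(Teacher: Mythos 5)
Your argument is correct and coincides with the paper's own (much terser) proof: both rest on the observation, drawn from the proof of Proposition \ref{phi}, that $\ph{I}$ identifies $\Ho{0}(\PP,\,\jj j_{1*}\LL[1])$ with flat sections of $\LL$ over the interior of $I$ while the $\var$-identifications with nearby cycles are the limits of such a section at the two endpoints, so the composite is transport along $I$. The midpoint compatibility you worry about is automatic once each cohomology group is identified with the space of flat sections over all of $(0,1)$ rather than with a single stalk.
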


\begin{proof}
The statement follows immediately from the proof of Proposition \ref{phi}:
for $j_{1*}\LL[1]$ the isomorphism $\phi_I$ identify with sections
of $i_I^! \LL[1]$ over the set of inner points of $I$, 
and the identification of it with nearby cycles consists in taking limits of these sections when approaching to the ends.
\end{proof}

\begin{rem}
Thus, $\Ho{0}(\PP,\, \mathop{j_{\infty!}j_{0*} j_{1*}}\LL[1])$ may be thought as the space  of sections of $\LL$ over the interval $(0,1)$ and
isomorphisms $\ph{I}$ as the specialization of the one in the nearby 
cycles at the ends, compare \cite[Introduction]{DGal}.
\end{rem}

Morphisms between different extensions of a smooth perverse sheaf 
on $\Gm\setminus \{1\}$ may be expressed in terms of Milnor triangles.

\begin{prop}
The following diagram commutes
\begin{equation}
\begin{tikzcd}[cramped]
\Ho{0}(i_1^!\FF) \arrow[r] \arrow[d, equal]
& \Ho{0}(\PP,\, \jj \FF) \arrow[r] 
& \Ho{0}(\PP,\, \mathop{j_{\infty!}j_{0*} j_{1*}j_1^*}\FF)\\
\Ho{0}(i_1^!\FF)  \arrow[r]
& \Van{{1-z}}{\FF} \arrow[r, "var"] \arrow[u,"\ph{I}"]
& \Nea{{1-z}}{\FF}=\Van{{1-z}}{j_{1*}j_1^*\FF} \arrow[u,"\ph{I}"] 
\end{tikzcd}
\end{equation}
where the second line is the cohomology of the Milnor triangle (\ref{var})
and the first line is the cohomology of the standard trianlge
\begin{equation}
\begin{tikzcd}[cramped]
{i_1}_*i_1^! \FF\arrow[r]
& \jj\FF \arrow[r] 
& \mathop{j_{1*}j_1^*j_{\infty!}j_{0*} }\FF \arrow[r, "+1"]
& {}
\end{tikzcd}
\end{equation}
\label{nea2}
\end{prop}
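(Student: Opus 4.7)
The plan is to exhibit both rows as the degree-zero part of one and the same long exact sequence, namely that of the pair $(0,1) \subset (0,1]$ in $\Gm$:
\begin{equation*}
H^\bullet(i_1^!\FF) \to H^\bullet_{(0,1]}(\Gm, \FF) \to H^\bullet_{(0,1)}(\Gm, \FF) \xrightarrow{+1}.
\end{equation*}
The bottom row is already so identified in the proof of Proposition \ref{phi}: Galligo's isomorphism gives $\Van{{1-z}}{\FF} \cong H^0_{(0,1]}(\Gm, \FF)$ and, applied to $j_{1*}j_1^*\FF$, $\Nea{{1-z}}{\FF} = \Van{{1-z}}{j_{1*}j_1^*\FF} \cong H^0_{(0,1)}(\Gm, \FF)$, so the Milnor triangle (\ref{var}) becomes the triangle (\ref{complex}).

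For the top row I would first observe that the open embeddings $j_0$, $j_\infty$, $j_1$ commute in the relevant senses, since they extend across pairwise disjoint points; in particular $j_{1*}j_1^*\jj\FF = \jj\, j_{1*}j_1^*\FF$ and $i_1^!\jj\FF = i_1^!\FF$. Applying $R\Gamma_I(\PP, -)$ to the attaching triangle for $\jj\FF$ at $1$, and then the forgetful map $R\Gamma_I \to R\Gamma$, produces the top row by naturality. By Proposition \ref{basic} applied to $j_{0*}\FF$ on $\mathbb{A}^1$, the forgetful map $H^0_I(\PP, \jj\FF) \to H^0(\PP, \jj\FF)$ is an isomorphism, and composed with excision it is $\ph{I}$ on the middle term. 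Analogously, Proposition \ref{basic} applied to $j_{0*}j_{1*}j_1^*\FF$, combined with excision and the adjunction formula $R\Gamma_Z \circ j_{1*} = R\Gamma_{j_1^{-1}Z}$ (taken for $Z = (0,1]$, so $j_1^{-1}Z = (0,1)$), identifies $H^0(\PP, \jj\, j_{1*}j_1^*\FF)$ with $H^0_{(0,1)}(\Gm, \FF)$, and this identification is $\ph{I}$ on the right term. Under these identifications, the top row maps onto the pair triangle displayed above, yielding the desired commutativity.

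The main obstacle is the bookkeeping required to check that the two instances of Proposition \ref{basic}, applied respectively to $\FF$ and to $j_{1*}j_1^*\FF$, splice compatibly into the attaching triangle at $1$. This reduces to the functoriality of shrinking — that is, of both the excision map and the forgetful map — in morphisms of constructible complexes on $\mathbb{A}^1$, which is built into the construction of Proposition \ref{basic}.
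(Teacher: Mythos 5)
Your proof is correct and follows essentially the same route as the paper: the paper's proof simply invokes the proof of Proposition \ref{phi}, which is precisely your argument that both rows are the degree-zero part of the long exact sequence of the pair $(0,1)\subset(0,1]$ — Galligo's identification for the bottom row and the shrinking/excision construction of $\ph{I}$, natural in $\FF\to j_{1*}j_1^*\FF$, for the top row. The only cosmetic divergence is that you apply Proposition \ref{basic} to $j_{0*}\FF$ where the paper writes $j_{0!}\FF$; your choice is the one consistent with $\jj=j_{\infty!}j_{0*}$.
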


\begin{proof}
The statement follows immediately from the proof of Proposition \ref{phi}.
\end{proof}

The  statements analogous to two propositions above may be formulated for the $!$-extension as well.

An element $v\in \Van{{1-z}}{\FF} $ gives an extension, which represents the class
$\Ho{1}(\PP,\allowbreak\, \jj \FF[-1])$, given by Proposition \ref{phi}.
Combining this extension with the connecting morphism in the standard triangle
\begin{equation}
\begin{tikzcd}[cramped]
\mathop{i_{0*}i_0^! j_{0!}}\FF\arrow[r]
& \mathop{j_{0!} j_{\infty!} }\FF \arrow[r] 
& \mathop{j_{0*} j_{\infty!} }\FF \arrow[r, "+1"]
& {}
\end{tikzcd}
\label{zero}
\end{equation}
we get an extension representing $\Ho{1}(i_0^! j_{0!}\FF)$.

On the other hand, applying to $\var (v)$ isomorphism 
from  Propositions \ref{hol} we get an element of $\Nea{{z}}{j_{0!}\FF}$ at point $0$.
With such an element, one may associate an extension  representing $\Ho{1}(i_0^! j_{0!}\FF)$ by means of the Milnor triangle      (\ref{var}).

\begin{prop}
Two introduces above extensions representing class in 
$\Ho{1}(i_0^! j_{0!}\FF)$ coincide.
\label{frame}
\end{prop}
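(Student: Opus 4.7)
The plan is to realize both extensions as the image of the class $v \in H^0_{(0,1]}(\Gm, \FF)$ (identifying $\Van{1-z}{\FF}$ with this local cohomology via Proposition \ref{phi}) under a single connecting morphism, that of the local cohomology triangle on $\mathbb{A}^1$ associated to the closed decomposition $I = \{0\} \sqcup (0,1]$ applied to $j_{0!}\FF$:
\begin{equation*}
i_{0*}i_0^! j_{0!}\FF \to i_{I*}i_I^! j_{0!}\FF \to i_{(0,1]*}i_{(0,1]}^! j_{0!}\FF \to i_{0*}i_0^! j_{0!}\FF[1].
\end{equation*}
Denote this triangle $(T)$ and its connecting map on cohomology by $\partial \colon H^0_{(0,1]}(\Gm, \FF) \to H^1(i_0^! j_{0!}\FF)$. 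The goal is to show that both extensions coincide with $\partial(v)$.

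For the first extension, build a morphism of triangles from $(T)$ to (\ref{zero}) given by the identity on the first term, the counit $i_{I*}i_I^!(-) \to (-)$ on the second, and the induced map on the third. After excision, the third-term map on $H^0$ is precisely the definition of $\ph{I}$: the forgetful map $H^0_{(0,1]}(\Gm, \FF) \to H^0(\mathbb{A}^1, j_{0*}\FF) = H^0(\PP, \jj\FF)$. Naturality of the connecting morphism under maps of triangles then yields $\partial(v) = \delta(\ph{I}(v))$, where $\delta$ is the connecting map of (\ref{zero}); this is the first extension.

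For the second extension, apply the analog of Proposition \ref{phi} at $0$ to the $!$-extension $j_{0!}\FF$, a direct repetition of Galligo's argument using a half-open subinterval $J \subset I$ having singular endpoint $0$ and non-singular endpoint in $(0,1)$. This identifies Milnor triangle (\ref{var}) at $0$ for $j_{0!}\FF$ with the local cohomology triangle for $\{0\} \subset J$ on $j_{0!}\FF$, and produces the isomorphism $\Nea{z}{j_{0!}\FF} \simeq H^0_{(0,1)}(\Gm, \FF)$. Under this identification, $w$ corresponds to $\var(v)$: this is the content of Proposition \ref{hol}, whose holonomy is defined via this same double local cohomology picture on the interval $I$. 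Now map $(T)$ to the local cohomology triangle for $\{0\} \subset J$ by the identity on the first term and the restriction-of-support maps (associated to the pair $\{1\} \subset I$) on the second and third; the third-term map is $\var$ itself, via the identification in Proposition \ref{phi} of triangle (\ref{complex}) with Milnor triangle (\ref{var}) at $1$. Naturality then yields $\partial(v) = \delta'(\var(v)) = \delta'(w)$, where $\delta'$ is the connecting map of Milnor triangle (\ref{var}) at $0$; this is the second extension.

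The main obstacle is the $!$-analog of Proposition \ref{phi} at $0$ and its compatibility with the holonomy of Proposition \ref{hol}; both follow by repeating Galligo's argument in this slightly different setting, since the holonomy isomorphism was defined through exactly the identification in question.
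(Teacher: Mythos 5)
Your proof is correct and is essentially a careful unwinding of the paper's one-line argument: the paper's proof just invokes ``the standard definition of nearby cycles and the Milnor triangle,'' and your realization of both extensions as the image of $v$ under the connecting map of the local cohomology triangle for $\{0\}\subset I$ applied to $j_{0!}\FF$ is precisely the intended mechanism, since the paper's $\ph{I}$ and its identification of Milnor triangles with local-cohomology triangles on half-intervals (via Galligo et al.\ in Proposition \ref{phi}) are built on exactly this picture. The only cosmetic point is that the natural restriction map out of $\Gamma_I$ lands in $\Gamma_{[0,1)}$ rather than $\Gamma_{[0,\epsilon]}$, so one should either take $J=[0,1)$ inside $\mathbb{A}^1\setminus\{1\}$ or insert the standard homotopy identification on the locally constant region, which your appeal to Galligo's argument already covers.
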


\begin{proof}
The statement follows immediately from the standard definition of nearby cycles
and the Milnor triangle.
\end{proof}

\begin{rem}
The Verdier duality gives isomorphisms between $\Ho{0}(\PP,\, \mathop{j^a_!j^b_* j^c_*}\LL[1])$ and $\Ho{0}(\PP,\, \mathop{j^a_*j^b_! j^c_!}\LL[1])$
for $\{a,b,c\}=\{0, \infty, 1\}$. Combining it with  natural maps
from $!$-extensions to $*$-extensions as in the proposition above,
one may get the action on the fiber of $\LL$ of monodromies at all infinite 
points, that is the action of the fundamental group of $\Gm\setminus\{1\}$.
\end{rem}

\section{Convolution}

\subsection{Moduli spaces}

Denote by $\M{n}$ the moduli space of  embeddings of an $n$-element set
$\set{n}\hookrightarrow \mathbb{P}^1$
to the complex projective line 
considered up to the
action of the M\"obius group.
This is a smooth affine variety, and it has a smooth
projective compactification $\Ms{n}$, which is the
moduli space of  stable curves.
The complement $\Ms{n}\setminus\M{n}$
is the union of normal crossing divisors.
These divisors are numerated by
partitions of $S$ in two subsets with cardinalities more than $2$.
Denote by $p_i\colon \M{n} \to \M{n-1}$ and $p_i\colon \Ms{n} \to \Ms{n-1}$ the
projection, forgetting the $i$-th point.

Introduce on $\PP$
 coordinates such that coordinates of first three elements of  $\set{n}$ are $0$, $\infty$ and $1$
correspondingly. 
Coordinates of  points labeled by the rest of elements of the finite set
are called simplicial coordinates on $\M{n}$.
Thus, a point of $\M{n+3}$ with simplicial coordinates $(t_1, \dots, t_n)$
is  $(0, \infty, 1, t_1, \dots t_n)$.

Simplicial coordinates identify $\M{4}$ with $\PP\setminus\{0, \infty, 1\}$,
and $\M{5}$ with $\M{4}\times\M{4}\setminus{\Delta}$, where $\Delta$
is the diagonal. The projections from $\M{5}$ to $\M{4}$ look
in simplicial coordinates as follows:
\begin{equation}
\begin{aligned}
p_3(0, \infty, 1, t_1, t_2)&=(0, \infty, 1, t_2/t_1)\\
p_4(0, \infty, 1, t_1, t_2)&=(0, \infty, 1, t_2)\\
p_5(0, \infty, 1, t_1, t_2)&=(0, \infty, 1, t_1)
\end{aligned}
\label{e5}
\end{equation}

Space $\Ms{4}$ is the obvious compactification of $\M{4}$, 
which is $\PP$. $\Ms{5}$ is isomorphic to 
$\Ms{4}\times\Ms{4}$ with blown-up points $(0,0)$, $(1,1)$ and $(\infty, \infty)$.

Denote by $\Ms{5}'$ the space $\Ms{4}\times\Ms{4}$ with blown-up points $(0,0)$ and $(\infty, \infty)$. In other words, $\Ms{5}'$ is $\Ms{5}$
with the blown-down line, corresponding to singular stable curves of 
type $(12)(345)$.
Denote by
\begin{equation}
p'_i \colon \Ms{5}' \to \Ms{4} 
\label{mp}
\end{equation}
the forgetful maps, as in (\ref{e5}).

\subsection{Convolution}

Let $m\colon \Gm\times\Gm\to \Gm$ be  the product map of the multiplicative group.
The convolution of two perverse sheaves $\EE$ and $\FF$ on $\Gm$ is
a perverse sheaf on $\Gm$
defined by
\begin{equation}
\con{\EE}{\FF} = \Rm m_!(\EE\boxtimes\FF)
\label{box}
\end{equation}
Note that because  map $m$ is affine, the convolution
of perverse sheaves is a perverse sheaf due to exactness property of affine morphisms, see  \cite[4.1]{BBD}.

\begin{rem}
Generally, $\operatorname{\mathbf{R}}m_!(\EE\boxtimes\FF)$ is a complex of perverse 
sheaves rather than a perverse sheaf, that is why we add $\tau^{\le 0}$
in the definition. One may avoid it, if, following \cite{DTerICM,DTerPr, DLet},
he works in the quotient category as in Theorem \ref{Quotient} because,
as it follows from the proof of Proposition \ref{conv2}, the higher terms
are smooth at $1$.
\end{rem}

Let $\imath\colon \PP \to \PP$
be the inversion map $z\mapsto z^{-1}$.
For a sheaf $\FF$ on $\PP$ introduce the notation
$$
\overline{\FF}= \imath^*\FF
$$

Recall, that in Section \ref{deligne} we denoted by
$\jj$ the $*$-extension at $0$ and $!$-extension at $\infty$ of a perverse sheaf from 
$\Gm$ to $\PP$. 

\begin{prop}[\cite{DLet, Katz+2012}]
For perverse sheaves $\EE$ and $\FF$ on $\Gm$
we have
\begin{equation}
\jj (\con{\EE}{\FF})=\Rm p'_{3*}({p'_5}^*(\overline{\jj\EE}) \otimes {p'_4}^*(\jj\FF))
\label{ext}
\end{equation}
where $p'_i$ are defined in (\ref{mp}).
\label{conv1}
\end{prop}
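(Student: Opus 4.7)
The plan is to view $p'_3$ as the minimal resolution of the rational map $\tilde m\colon \PP\times\PP\to\PP$, $(t_1,t_2)\mapsto t_2/t_1 = m(1/t_1, t_2)$. The multiplication $m$ is indeterminate precisely at $(0,\infty)$ and $(\infty,0)$; composing with $\imath\times\mathrm{id}$ shifts these to $(\infty,\infty)$ and $(0,0)$, which are exactly the points blown up in $\Ms{5}' = \mathrm{Bl}_{(0,0),(\infty,\infty)}(\PP\times\PP)$. Hence $\Ms{5}'$ is the minimal smooth model on which $\tilde m = m\circ(\imath\times\mathrm{id})$ becomes a morphism, namely $p'_3$.

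The first step is to restrict to $\Gm\subset\Ms{4}$. On the preimage under $p'_3$, which contains $\Gm\times\Gm$, the sheaf ${p'_5}^*\overline{\jj\EE}\otimes{p'_4}^*\jj\FF$ is $\overline{\EE}\boxtimes\FF$, and base change with the change of variable $(x,y)=(1/t_1,t_2)$ identifies the right-hand side on $\Gm$ with $Rm_!(\EE\boxtimes\FF)$. Applying $\tau^{\le 0}$ yields $\con{\EE}{\FF}$.

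The second step is to verify the extensions at $0$ and $\infty$. On $\PP\times\PP$ the sheaf $\overline{\jj\EE}\boxtimes\jj\FF$ is $!$-extended along $\{t_1=0\}$ and $\{t_2=\infty\}$ and $*$-extended along $\{t_1=\infty\}$ and $\{t_2=0\}$; at the corners $(0,\infty)$ and $(\infty,0)$ these agree, while at $(0,0)$ and $(\infty,\infty)$ they are mixed, which is exactly why those two points are blown up. By proper base change, the stalk at $\infty$ is the cohomology of $(p'_3)^{-1}(\infty)$, the union of strict transforms of $\{t_1=0\}$ and $\{t_2=\infty\}$ meeting at $(0,\infty)$; on both the pulled-back sheaf vanishes (being $!$-extended), so the stalk is zero, matching $j_{\infty!}$. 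Dually at $0$, the preimage is the union of strict transforms of $\{t_2=0\}$ and $\{\infty\}\times\PP$ meeting at $(\infty,0)$, both carrying $*$-extensions, and the resulting cohomology reproduces $j_{0*}(\con{\EE}{\FF})|_0$.

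The main obstacle is controlling contributions from the exceptional divisors $E_{(0,0)}$ and $E_{(\infty,\infty)}$, each isomorphic to $\PP$. On each of them $p'_3$ is an isomorphism onto $\PP$, so the unique points over $\{0,\infty\}$ lie on the strict transforms of coordinate divisors already considered, while the interior maps into $\Gm$ and thus contributes only to the open computation. This precise match between the mixed extension pattern of $\overline{\jj\EE}\boxtimes\jj\FF$ and the two blow-ups is exactly what distinguishes $\Ms{5}'$ from $\Ms{5}$: the additional $(12)(345)$-divisor of $\Ms{5}$ sits above an interior point of $\Ms{4}$ and would introduce spurious contributions, whereas $\Ms{5}'$ is the correct minimal model for this formula.
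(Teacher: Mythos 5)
Your proof follows the same two\nobreakdash-step route as the paper's: identify the pushforward with the convolution over $\Gm$, then determine the extension type at $0$ and $\infty$ by inspecting the two-component fibers. Your description of the extension pattern of $\overline{\jj\EE}\boxtimes\jj\FF$, of the resolution of indeterminacy of $t_2/t_1$, and of the fibers of $p'_3$ over $0$ and $\infty$ is correct. The one step you leave unresolved is precisely the one you call the main obstacle. Over $c\in\Gm$ the fiber of $p'_3$ is the open fiber $\{t_2=ct_1\}\cap(\Gm\times\Gm)$ compactified by one point on each exceptional divisor; since $p'_3$ is proper, $\Rm p'_{3*}$ computes cohomology of this \emph{compact} fiber, whereas $\con{\EE}{\FF}$ is a compactly supported pushforward along the open fiber. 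Saying the exceptional divisors ``contribute only to the open computation'' does not reconcile these; what does is that ${p'_5}^*(\overline{\jj\EE})$ vanishes identically on $E_{(0,0)}$ (because $p'_5$ contracts $E_{(0,0)}$ to $0$, where $\overline{\jj\EE}$ is $!$-extended) and ${p'_4}^*(\jj\FF)$ vanishes identically on $E_{(\infty,\infty)}$, so the restriction of ${p'_5}^*(\overline{\jj\EE})\otimes{p'_4}^*(\jj\FF)$ to each compact fiber has zero stalks at the two added points, i.e.\ it is the $!$-extension of $\overline{\EE}\boxtimes\FF$ from the open part, and the $*$-pushforward of a $!$-extension along a proper map computes the $!$-pushforward. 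This is exactly the point the paper's proof makes explicit (``the former is the $!$-extension of the latter''); with it inserted, your argument is complete and coincides with the paper's.
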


\begin{proof}
The proof proceeds in two steps. 

Firstly, 
one needs to build an isomorphism 
\begin{equation*}
\con{\EE}{\FF} \cong \Rm p'_{3*}({p'_5}^*(\overline{\jj\EE}) \otimes {p'_4}^*(\jj\FF))
\end{equation*}
on $\Gms$.
By (\ref{e5}), on fibers of the projection $p_3$, sheaves
 ${p'_5}^*(\overline{\jj\EE}) \otimes {p'_4}^*(\jj\FF))$ and  $\EE\boxtimes\FF$
are isomorphic  out of $0$ and $\infty$. Then one may see that the former 
is the $!$-extension the latter, it implies that its $*$-pushforward
equals to $!$-pushforward of the latter.

Secondly, one need to show that $\Rm p'_{3*}({p'_5}^*(\overline{\jj\EE}) \otimes {p'_4}^*(\jj\FF))$ is  $*$-extension to $0$
and $!$-extension to $\infty$ of $\con{\EE}{\FF}$. Consider the fiber
of $p'_3$ over $0$. It consists of two intersecting lines,
and ${p'_5}^*(\overline{\jj\EE}) \otimes {p'_4}^*(\jj\FF)$
is $*$-extension of $\EE\boxtimes\FF$ to these lines.
Thus, it is true for the pushforward as well. 
Over $\infty$ the situation is the same.
\end{proof}

\begin{prop}[\cite{DLet, Katz+2012}]
For perverse sheaves $\EE, \FF\in \Shs$, 
\begin{equation}
\Ho{0}(\PP,\, \jj(\con{\EE}{\FF}))=\Ho{0}(\PP,\, \jj\EE)\otimes H^0(\PP,\,\jj\FF)
\label{otimes}
\end{equation}
\label{conv2}
\end{prop}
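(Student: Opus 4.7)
The plan is to use Proposition \ref{conv1} to move the computation to the compact surface $\Ms{5}'$ and then apply a K\"unneth argument on $\Ms{4} \times \Ms{4} = \PP \times \PP$ via the blow-down. Set $\mathcal{G} = {p'_5}^*(\overline{\jj\EE}) \otimes {p'_4}^*(\jj\FF)$, so by Proposition \ref{conv1} one has $\jj(\con{\EE}{\FF}) = \tau^{\le 0} p'_{3*}\mathcal{G}$. I would first compute the total hypercohomology $H^*(\Ms{5}', \mathcal{G}) = H^*(\PP, p'_{3*}\mathcal{G})$ without paying attention to the truncation, and only afterwards reconcile with the $\tau^{\le 0}$.

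For the K\"unneth step, let $\pi \colon \Ms{5}' \to \Ms{4} \times \Ms{4}$ be the blow-down contracting the exceptional divisors above $(0,0)$ and $(\infty,\infty)$, and let $q_1, q_2$ denote the two factor projections of $\Ms{4} \times \Ms{4}$. Since $p'_5 = q_1 \circ \pi$ and $p'_4 = q_2 \circ \pi$, we have $\mathcal{G} = \pi^*(\overline{\jj\EE} \boxtimes \jj\FF)$, and the adjunction map $\overline{\jj\EE}\boxtimes\jj\FF \to \pi_*\pi^*(\overline{\jj\EE}\boxtimes\jj\FF)$ is an isomorphism: by proper base change it suffices to check the stalks of its cone over the two blown-up points, and both vanish because $(\overline{\jj\EE})_0 = (\jj\EE)_\infty = 0$ (the $!$-extension at $\infty$) and $(\jj\FF)_\infty = 0$. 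Hence
\begin{equation*}
H^*(\Ms{5}',\mathcal{G}) = H^*(\PP\times\PP,\, \overline{\jj\EE}\boxtimes\jj\FF) = H^*(\PP,\overline{\jj\EE}) \otimes H^*(\PP,\jj\FF)
\end{equation*}
by the K\"unneth formula. Using the homeomorphism $\imath$ to identify $H^*(\PP,\overline{\jj\EE})$ with $H^*(\PP,\jj\EE)$ and invoking Proposition \ref{phi}, each factor is concentrated in degree $0$, so $H^*(\Ms{5}',\mathcal{G})$ is concentrated in degree $0$ with value $\Ho{0}(\PP,\jj\EE) \otimes \Ho{0}(\PP,\jj\FF)$.

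The main obstacle is the final step: passing from $\Ho{0}(\PP, p'_{3*}\mathcal{G})$ to $\Ho{0}(\PP, \tau^{\le 0} p'_{3*}\mathcal{G}) = \Ho{0}(\PP, \jj(\con{\EE}{\FF}))$, since a priori the higher perverse terms $\Hp{i}(p'_{3*}\mathcal{G})$ for $i > 0$ could contribute to the degree-$0$ hypercohomology on $\PP$. To control them, I would first restrict to the affine locus: over $\Gm\setminus\{1\}$ the map $p'_3$ reduces to the affine multiplication $m \colon \Gm\times\Gm\to\Gm$, so BBD's affine-pushforward theorem guarantees $p'_{3*}\mathcal{G}$ is in perverse degree $\le 0$ there, i.e., any $\Hp{i>0}$ is supported on $\{0,1,\infty\}$. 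A direct local analysis of $p'_3$ near these three points (using the explicit fiber geometry of $\Ms{5}'$) should then pin down the structure of these higher terms---in particular establishing the ``smooth at $1$'' claim of the Remark---sufficiently to conclude that they do not contribute to $\Ho{0}(\PP,-)$, using the long exact sequence of the triangle $\tau^{\le 0} \to \mathrm{id} \to \tau^{>0}$ together with the vanishing of $\Ho{i}(\PP, \jj(\con{\EE}{\FF}))$ for $i \ne 0$ given by Proposition \ref{phi}. This truncation analysis is the delicate step; the K\"unneth/blow-down reduction itself is comparatively direct.
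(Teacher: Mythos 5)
Your computation of the total cohomology is correct and is essentially an expanded, more careful version of the paper's K\"unneth step: the paper simply invokes ``the K\"unneth theorem'' for $\R p'_{3*}\mathcal{G}$, whereas you justify it by writing $\mathcal{G}=\pi^*(\overline{\jj\EE}\boxtimes\jj\FF)$, checking that $\pi_*\pi^*\cong\mathrm{id}$ on this object because the stalks at the two blown-up points vanish, and then applying K\"unneth on $\PP\times\PP$ together with Proposition \ref{phi}. The genuine gap is in the truncation step, which you rightly flag as the crux but then set up on a false premise. Over $\Gms$ the object $p'_{3*}\mathcal{G}$ is the \emph{$!$}-pushforward $\R m_!(\EE\boxtimes\FF)$ (since $\mathcal{G}$ restricted to a compactified fiber of $p'_3$ is the $!$-extension of $\EE\boxtimes\FF$), and BBD's affine theorem for $!$-pushforwards gives ${}^pD^{\ge 0}$, not ${}^pD^{\le 0}$: it bounds the perverse amplitude from \emph{below}, while the upper bound coming from the one-dimensional fibers is only ${}^pD^{\le 1}$. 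Accordingly, your claim that $\Hp{i}$ for $i>0$ is supported on $\{0,1,\infty\}$ is false: the stalk of $\R m_!(\EE\boxtimes\FF)$ at a generic $\lambda$ is $H^{\bullet}_c(m^{-1}(\lambda),\EE\boxtimes\FF)$, whose top compactly-supported cohomology is generically nonzero (take $\EE=\FF=\mathbb{C}_{\Gm}[1]$, where $H^2_c(\Gm,\mathbb{C})=\mathbb{C}$), so $\Hp{1}(\R p'_{3*}\mathcal{G})$ is generically a shifted local system on $\Gm$, not a skyscraper. A local analysis near the three special points therefore cannot close the argument as you have framed it.

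What is actually needed --- and what the paper's proof supplies --- is that $\Hp{1}(\R p'_{3*}\mathcal{G})$ is the $\jj$-extension of a perverse sheaf that is smooth on all of $\Gm$ (this is also the content of the Remark following the definition of convolution). Granting this, Proposition \ref{phi} gives $\Ho{p}(\PP,\,\Hp{1}(\R p'_{3*}\mathcal{G}))=0$ for $p\ne 0$, and the degree-$0$ cohomology also vanishes because vanishing cycles at $1$ of a smooth sheaf are zero; equivalently, the perverse Leray spectral sequence for $p'_3$ is concentrated in a single column and degenerates, identifying $\Ho{0}(\Ms{5}',\,\mathcal{G})$ with $\Ho{0}(\PP,\,\Hp{0}(\R p'_{3*}\mathcal{G}))=\Ho{0}(\PP,\,\jj(\con{\EE}{\FF}))$. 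Note that your long exact sequence for $\tau^{\le 0}\to\mathrm{id}\to\tau^{>0}$ gives injectivity of $\Ho{0}(\PP,\,\tau^{\le 0}\R p'_{3*}\mathcal{G})\to\Ho{0}(\PP,\,\R p'_{3*}\mathcal{G})$ for free (a perverse sheaf on a curve has no hypercohomology below degree $-1$), but surjectivity requires exactly the vanishing of $\Ho{0}(\PP,\,\tau^{>0}\R p'_{3*}\mathcal{G})=\Ho{-1}(\PP,\,\Hp{1}(\R p'_{3*}\mathcal{G}))$, and that is where the identification of $\Hp{1}$ as $\jj$ of a smooth object --- not a support statement --- must enter.
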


\begin{proof}
Consider the Leray spectral sequence for 
$\R p'_{3*}({p'_5}^*(\overline{\jj\EE})\allowbreak \otimes {p'_4}^*(\jj\FF))$.
One may see that cohomologies of the derived pushforward
 outside $1$ is $*$-extension to $0$ and $!$-extension to $\infty$
of a smooth object. By Proposition \ref{phi}, it follows that
only the zeroth row of the spectral sequence does not vanish,
thus, it collapses at the second sheet. 
By the K\"unneth theorem, the cohomology of the derived pushforward
 equals to the cohomology of the right side of 
(\ref{otimes}). Because the second sheet of the Leray
spectral sequence consists  of one row, these cohomologies are presented
by 0th cohomology of the pushforward, thus, coincide with
the one for $\Rm p'_{3*}({p'_5}^*(\overline{\jj\EE}) \otimes {p'_4}^*(\jj\FF))$. 
\end{proof}

\subsection{Vanishing cycles of the convolution}
\label{category}

Proposition \ref{conv2} above and its proof shows that 
$H^0(\PP,\jj-)$ is a tensor functor on the category 
of perverse sheaves on $\Gm$ with respect to the tensor
structure given by convolution. 
By Proposition \ref{phi}, this functor is isomorphic to 
vanishing cycles at $1$.
This functor is not faithful, so it is not 
a fiber functor. It may be fixed by taking an appropriate quotient
of $\Shs$.

The following theorem is proved in \cite{DLet, DTerPr}. We will not need it and
 cite it for completeness.

\begin{theorem}[\cite{DTerICM,DTerPr, DLet}] Consider the subcategory of $\Shs$ formed by perverse sheaves
with nilpotent monodromy and  take the Serre quotient of it
by the subcategory of perverse sheaves smooth on $\Gm$.
This is a Tannakian category with the fiber functor
given by vanishing cycles at $1$ and the tensor product given 
by the multiplicative convolution.
\label{Quotient}
\end{theorem}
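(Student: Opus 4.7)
I would verify the axioms of a Tannakian category step by step and then invoke Deligne's recognition theorem.

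First, I would check that convolution descends to the quotient. The full subcategory $\Shs^{\mathrm{nilp}}\subset \Shs$ of objects with nilpotent monodromies at $0$, $1$ and $\infty$ is abelian and closed under extensions. By Proposition \ref{conv1}, $\jj(\con{\EE}{\FF})$ is a pushforward along $p'_3$ of a tensor product of sheaves whose local monodromies at $0$ and $\infty$ are nilpotent; a Thom--Sebastiani style computation of the monodromy of this pushforward shows that nilpotency is preserved. Next, the Serre subcategory of sheaves smooth on $\Gm$ is a $\otimes$-ideal: if $\EE$ is smooth on $\Gm$, then in the description of Proposition \ref{conv1} the fiber of ${p'_5}^*(\overline{\jj\EE})\otimes {p'_4}^*(\jj\FF)$ over a neighborhood of $1\in\PP$ is smooth along $p'_3$, so $\con{\EE}{\FF}$ is smooth at $1$ and hence smooth on all of $\Gm$. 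Therefore convolution descends to the Serre quotient and makes it a symmetric monoidal abelian category.

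Second, I would identify the fiber functor. Proposition \ref{phi} furnishes the natural isomorphism $\ph{I}\colon \Van{1-z}{-}\xrightarrow{\sim}\Ho{0}(\PP,\, \jj -)$, and Proposition \ref{conv2} shows that $\Ho{0}(\PP,\, \jj -)$ is a $\mathbb{C}$-linear tensor functor from $(\Shs,\,\ast^!)$ to $(\mathrm{Vec}_{\mathbb{C}},\,\otimes)$. Exactness on perverse sheaves follows from the vanishing of $\Ho{i}(\PP,\, \jj \FF)$ for $i\neq 0$ established in Proposition \ref{phi}. The functor kills sheaves smooth on $\Gm$, since they carry no vanishing cycles at $1$, so it descends to the quotient; faithfulness on the quotient is then automatic, because $\Van{1-z}{\FF}=0$ forces $\FF$ to be smooth at $1$ and therefore zero in the quotient.

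Third, I would establish rigidity. The tensor unit is the class of the skyscraper $\delta_1=i_{1*}\mathbb{C}$, for which $\con{\delta_1}{\FF}\simeq \FF$ after passing to the quotient by a direct computation using $m\colon\Gm\times\Gm\to\Gm$. The dual of $\FF$ is $\imath^*\mathbb{D}\FF$, where $\mathbb{D}$ is Verdier duality and $\imath(z)=z^{-1}$; the evaluation and coevaluation morphisms are obtained from Verdier biduality combined with proper base change for $m$. With the fiber functor in hand, Deligne's Tannakian recognition theorem yields the conclusion.

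The principal obstacle is rigidity inside the Serre quotient. On the unquotiented category $\Shs^{\mathrm{nilp}}$ neither the unit isomorphism $\con{\delta_1}{\FF}\to\FF$ nor the triangle identities for $\imath^*\mathbb{D}\FF$ hold on the nose: they hold only up to the ideal of sheaves smooth on $\Gm$. Verifying that the Serre quotient promotes these "equalities modulo the ideal" to genuine coherent isomorphisms, and that the resulting rigid structure is compatible with the fiber functor, is precisely why both the nilpotency restriction and the particular choice of Serre ideal are needed.
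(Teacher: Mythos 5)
The first thing to say is that the paper contains no proof of Theorem \ref{Quotient} to compare you against: it is stated with the explicit remark that it ``is proved in \cite{DLet, DTerPr}'' and is cited only for completeness, and the surrounding text even stresses that the quotient category is not used elsewhere in the paper. Judged on its own terms, your plan follows the standard Katz--Deligne route and correctly isolates the three issues (descent of the tensor structure to the Serre quotient, exactness and faithfulness of the fiber functor, rigidity). But as written it is a programme rather than a proof: the step you yourself flag as ``the principal obstacle'' --- constructing evaluation and coevaluation for $\imath^*$ of the Verdier dual and verifying the triangle identities in the quotient --- is precisely the mathematical content of the theorem, and nothing in the sketch supplies it. The mechanism you would need to make explicit is that the cone of the natural map from the $!$-convolution $\Rm m_!(\EE\boxtimes\FF)$ to the corresponding $*$-convolution is smooth on $\Gm$ (this is where the nilpotency/unipotency hypothesis enters, to kill the contribution of invariants and coinvariants at $0$ and $\infty$); only once the two convolutions are identified in the quotient do the adjunctions for $m_!$ and $m_*$ combine with Verdier biduality to produce a genuine duality datum. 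Without that identification the candidate dual $\imath^*$ of the Verdier dual of $\FF$ does not satisfy the axioms even modulo the ideal.

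Two smaller corrections. The unit isomorphism $\con{\delta_1}{\FF}\simeq\FF$ holds on the nose, since $m$ restricted to $\{1\}\times\Gm$ is an isomorphism, so it should not be listed among the identities that only hold up to the ideal. And faithfulness of the fiber functor on the quotient does not follow merely from the observation that $\Van{1-z}{\FF}=0$ forces $\FF$ to be smooth on $\Gm$: you need to combine this with exactness of the \emph{induced} functor on the Serre quotient, using that an exact functor between abelian categories killing no nonzero object is faithful (apply it to $\mathrm{im}\,f$). You implicitly rely on this, but exactness on the quotient, not just on $\Shs$, is the statement that needs to be recorded.
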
 

\begin{rem}
The category $\mathrm{Perv}_0$ introduced in
\cite[12.6]{Katz+1991} is closely related with the above construction, 
it gives a splitting of the analogous quotient,
but for the category of perverse sheaves on the affine line with the
additive convolution as a tensor product.
\end{rem}

\section{Singular fibers}

\subsection{Sheaves without sections with support at 1}


We say that a perverse sheaf $\FF\in \Shs$  has no sections
with support at 1 if $H^0(i_1^! \FF)=0$. Equivalently,   sheaf  has no sections
with support at 1 if the map $\var\colon \Van{1-z}{\FF}\to \Nea{1-z}{\FF}$
induced by (\ref{var}) at point $1$ is injective. Two main examples are the $*$-extension and the middle extension of a smooth sheaf on $\Gm\setminus\{1\}$. 
An important feature of such sheaves is  that for them
(\ref{complex}) defines a two-term complex, representing
vanishing cycles.

\begin{prop}
If one of perverse sheaves $\EE, \FF\in\Shs$
 has no sections with support at $1$, then the convolution 
$\con{\EE}{\FF}$ has  no sections with support at $1$ as well.
\label{support}
\end{prop}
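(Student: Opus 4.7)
The plan is to translate the hypothesis through the Milnor triangle and then, via base change along the smooth multiplication $m$, to reduce to a K\"unneth computation at the point $(1,1)\in\Gm\times\Gm$. By the Milnor triangle (\ref{var}) applied at $1$, $H^0(i_1^!\FF)$ is the kernel of $\var\colon\Van{1-z}{\FF}\to\Nea{1-z}{\FF}$, so the assumption that one of $\EE,\FF$ has no sections at $1$ is equivalent to $H^0(i_1^!\EE)=0$ or $H^0(i_1^!\FF)=0$; one must show $H^0(i_1^!\con{\EE}{\FF})=0$.

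Base change for the smooth affine morphism $m\colon\Gm\times\Gm\to\Gm$ of relative dimension one yields
$$
i_1^!\,\R m_!(\EE\boxtimes\FF)\simeq\R\Gamma_c(\Gm,\,\tilde\iota^!(\EE\boxtimes\FF)),
$$
where $\tilde\iota\colon\Gm\hookrightarrow\Gm\times\Gm$, $x\mapsto(x,x^{-1})$, is the embedding of the antidiagonal $m^{-1}(1)$. By the remark following the definition of $\ast^!$ the higher cohomology sheaves of $\R m_!(\EE\boxtimes\FF)$ are smooth at $1$; since $i_1^!$ of a smooth sheaf in cohomological degree $k\geq 1$ is concentrated in degrees $\geq 3$, the truncation $\tau^{>0}\R m_!$ contributes nothing to $H^0$, and so $H^0(i_1^!\con{\EE}{\FF})=H^0_c(\Gm,\,\tilde\iota^!(\EE\boxtimes\FF))$.

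Write $\mathcal{G}:=\tilde\iota^!(\EE\boxtimes\FF)$. Away from $x=1$ the box product is smooth, so the $!$-pullback along the codimension-one smooth embedding $\tilde\iota$ is itself smooth, namely a local system $\LL$ in cohomological degree $0$ on $\Gm\setminus\{1\}$. One has $H^0_c(\Gm,\,j_{1*}\LL)=0$: a compactly supported section of $j_{1*}\LL$ must vanish in neighbourhoods of $0$ and of $\infty$, hence everywhere by local constancy on the connected curve $\Gm\setminus\{1\}$. Applying $\R\Gamma_c(\Gm,-)$ to the standard attachment triangle $i_{1*}i_1^!\mathcal{G}\to\mathcal{G}\to j_{1*}j_1^*\mathcal{G}\to +1$ then yields a surjection $H^0(i_1^!\mathcal{G})\twoheadrightarrow H^0_c(\Gm,\mathcal{G})$.

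By the K\"unneth formula for $!$-stalks of external tensor products at a point,
$$
H^0(i_1^!\mathcal{G})=H^0(i_{(1,1)}^!(\EE\boxtimes\FF))=H^0(i_1^!\EE)\otimes H^0(i_1^!\FF),
$$
using $i_{(1,1)}^!=i_1^!\circ\tilde\iota^!$ on the left and, on the right, the fact that both $i_1^!\EE$ and $i_1^!\FF$ lie in non-negative cohomological degrees, so that only the $(0,0)$ bi-degree contributes. The hypothesis makes one of the two factors zero, hence the tensor product vanishes and $H^0(i_1^!\con{\EE}{\FF})=0$. The main obstacle is justifying the K\"unneth identification for the $!$-stalk of a box product; this is standard but relies on identifying the dualizing complex of a product with the box product of dualizing complexes.
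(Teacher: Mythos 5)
Your overall strategy (stratify the fiber over $1$ by the point $(1,1)$ and its complement, kill the point contribution by K\"unneth, kill the complement contribution by a sections argument) mirrors the paper's proof, but the reduction to the fiber is where your argument breaks. You invoke a base change of the form $i_1^{!}\,\R m_! \simeq \R\Gamma_c\bigl(m^{-1}(1),\,\tilde\iota^{!}(-)\bigr)$, i.e.\ an isomorphism $g^{!}f_! \simeq f'_!g'^{!}$ for a Cartesian square. This is not one of the valid base change isomorphisms: the unconditional ones are $g^{*}f_!\simeq f'_!g'^{*}$ and $g^{!}f_{*}\simeq f'_{*}g'^{!}$, and $g^{!}f_!\simeq f'_!g'^{!}$ holds only when $f$ is proper or $g$ is smooth. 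Here $m\colon\Gm\times\Gm\to\Gm$ is not proper and $i_1$ is a closed point. A one-dimensional counterexample shows the failure is real: for $j\colon\Gm\to\mathbb{A}^1$ and $i_0\colon\{0\}\to\mathbb{A}^1$ one has $i_0^{!}\,j_!\,\mathcal{L}[1]\simeq H^{\bullet}(\Delta^{*},\mathcal{L})$, which is nonzero (in degree $0$ it is the space of monodromy invariants), while the fiber product is empty so the other side vanishes. The discrepancy in your situation is exactly a boundary-at-infinity contribution: the fibers $m^{-1}(a)$ are open curves, and $i_1^{!}$ of $\R m_!$ sees $!$-extension data at the two ends of the compactified fiber that $\R\Gamma_c$ of $\tilde\iota^{!}$ does not. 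This is precisely why the paper first replaces $m_!$ by the \emph{proper} pushforward $\R p'_{3*}$ of the mixed extension ${p'_5}^*(\overline{\jj\EE})\otimes{p'_4}^*(\jj\FF)$ on $\Ms{5}'$ (Proposition~\ref{conv1}), applies the legitimate base change $i_1^{!}\circ\R p'_{3*}\simeq \R(p'_3)_{*}\circ f^{!}$ along the compact fiber $p_3^{\prime\,-1}(1)\cong\PP$, and then disposes of the complement term using the degree concentration of Proposition~\ref{phi} for the $j_{\infty!}j_{0*}$-extension. Your compactly-supported-sections argument on the open curve $\Gm$ is computing a different group from the one that actually arises.

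The remaining ingredients of your proposal are sound and agree with the paper: the identification of ``no sections with support at $1$'' with $H^0(i_1^{!}\FF)=0$ via the Milnor triangle, the K\"unneth identification $H^0(i_{(1,1)}^{!}(\EE\boxtimes\FF))\simeq H^0(i_1^{!}\EE)\otimes H^0(i_1^{!}\FF)$ (using that $i_1^{!}$ of a perverse sheaf on a curve sits in non-negative degrees), and the observation that the perverse truncation in the definition of $\con{\EE}{\FF}$ does not affect $H^0(i_1^{!}-)$ because the higher perverse cohomologies are smooth at $1$. To repair the proof you should either carry out the computation on the compactified fiber inside $\Ms{5}'$ as the paper does, or else explicitly prove that the boundary terms $i_b^{!}$ at the two points at infinity of the compactified fiber contribute nothing in degree $0$ — which is not automatic and is in effect what Propositions~\ref{conv1} and~\ref{phi} are engineered to control.
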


\begin{proof}
Calculate $i_1^!(\con{\EE}{\FF})$ by means of the base change for 
the map $p'_3$. Let $f\colon p^{\prime -1}_3(1)\to \Ms{5}'$
be the embedding of the fiber. We need to calculate
$$\Ho{0}(p^{\prime -1}_3(1), f^!({p'_5}^*(\overline{\jj\EE}) \otimes {p'_4}^*(\jj\FF))$$.

The fiber $p^{\prime -1}_3(1)$  is the projective line.
Denote by $s$ the embedding of point $(0, \infty, 1,1,1)$ in this line,
and by $t$ the embedding of the compliment. 
Denote ${p'_5}^*(\overline{\jj\EE}) \otimes {p'_4}^*(\jj\FF)$ by $\mathcal{A}$ and consider the standard triangle
\begin{equation*}
\begin{tikzcd}[cramped]
s_*s^!f^!{\mathcal{A}}\arrow[r]
& f^!{\mathcal{A}}\arrow[r] 
& t_*t^*f^!{\mathcal{A}}\arrow[r, "+1"]
& {}
\end{tikzcd}
\end{equation*}
Taking cohomology, we get
\begin{equation}
\begin{tikzcd}[cramped]
\Ho{0}(s^!f^!{\mathcal{A}})\arrow[r]
& \Ho{0}(\PP,\, f^!{\mathcal{A}})\arrow[r] 
& \Ho{0}(\PP,\, t_*t^*f^!{\mathcal{A}})\arrow[r, "+1"]
& {}
\end{tikzcd}
\label{hhh}
\end{equation}
where $\PP$ is $p^{\prime -1}_3(1)$. The first term is equal
to $\Ho{0}(i_1^!\EE)\otimes \Ho{0}(i_1^!\FF)$ by the K\"unneth theorem and 
vanishes by the condition of the proposition. One may see that
$t_*t^*f^!{\mathcal{A}}$ is isomorphic to 
$\mathop{j_{\infty!}j_{0*} j_{1*}j_1^*}(\overline{\EE}\otimes\FF)[-2]$,
where $j_1$ is $\Gm\setminus\{1\}\hookrightarrow\Gm$. 
One may see that $i$-th cohomology of the latter sheaf vanishes
for $i\ne 1$,
applying Proposition \ref{phi} to $\mathop{j_{1*}j_1^*}(\overline{\EE}\otimes\FF)[-1]$. It implies that the third term in (\ref{hhh}) vanishes,
thus the middles term vanishes too.
\end{proof}

\subsection{Nearby cycles of convolution at 0 via the Verdier specialization}

For a perverse sheaf from $\Shs$ without sections with support at $1$,
one may think about vanishing cycles at $1$ as a subspace of
nearby cycles at $1$. For the convolution 
 of
sheaves without sections
with support at 1,
one may  define a natural
embedding from the tensor product of vanishing cycles of factors
$\Van{{1-z}}{\EE}\otimes\Van{{1-z}}{\FF}$ 
to the nearby cycles at $0$ of the convolution $\Nea{{z}}{\con{\EE}{\FF}}$
in the following way.

Consider the fiber of $p_3'$ (or, equivalently, $p_3$) over $0$.
This is  the nodal stable curve, consisting of two components.
The convolution
is the shifted pushforward under $p_3'$ of the
sheaf ${p'_5}^*(\overline{\jj\EE}) \otimes {p'_4}^*(\jj\FF))$.
To calculate nearby cycles of the pushforward of ${p'_5}^*(\overline{\jj\EE}) \otimes {p'_4}^*(\jj\FF)$,
calculate the nearby cycles $\nea{z}{{p'_5}^*(\overline{\jj\EE}) \otimes {p'_4}^*(\jj\FF)}$ on the singular fiber, 
and then take the pushforward.

Let in
\begin{equation}
s\!:\, * \to p_3^{-1}( 0) \leftarrow \mathbb{A}^1 \cup \mathbb{A}^1 \, :\! t_{1,2}
\label{sing}
\end{equation}
$s$ be the embedding of the singular point, corresponding to the stable curve of type $(15)(3)(24)$, and $t_i$ be the embedding
of the $i$th complement.
The first component, corresponding to stables curves
of types $(15)(234)$ and the second one --- to $(135)(24)$. 

Introduce  coordinates on the compactification of the first and the second component as follows (see the left part of the picture at the end of the text):
\begin{equation*}
(0, \infty, 1)= ((15)(4)(23), (15)(3)(24) , (15)(2)(34))
\end{equation*}
and:
\begin{equation*}
(0, \infty, 1)= ( (15)(3)(24), (13)(5)(24),  (35)(1)(24))
\end{equation*}

Denote by $\EE_0$  the shifted vector bundle on $\Gm$ with fiber $\Nea{az}{\EE}[1]$ at point $a$ and   by $\FF_0$  the shifted vector bundle on $\Gm$ with fiber $\Nea{az}{\FF}[1]$ at point $a$. One may see that
$\EE_0$ and $\FF_0$ are isomorphic to restriction on $\Gm$ of Verdier specializations (\cite{Ver83})
of $\EE$ and $\FF$ along $0$ after identification of the tangent space to $\PP$
at $0$ with $\mathbb{A}^1=\PP\setminus\infty$ in the natural way.

Denote ${p'_5}^*(\overline{\jj\EE}) \otimes {p'_4}^*(\jj\FF)$ by $\mathcal{A}$.

One may see that on the first component in coordinates as above ${p'_5}^*(\overline{\jj\EE})$ is 
isomorphic to the restriction of $\overline{\jj\EE}$ and nearby cycles of 
$ {p'_4}^*(\jj\FF))$ along $z$ is isomorphic to $j_{\infty!}\FF_0$.
Since nearby cycles of tensor product is isomorphic to 
the tensor product of nearby cycles, we get
\begin{equation}
t_{1*}t^*_1\nea{z}{{p'_5}^*(\overline{\jj\EE}) \otimes {p'_4}^*(\jj\FF)}=
\overline{\jj(\EE\otimes \FF_0)}
\label{near1}
\end{equation}
and, analogously,
\begin{equation}
t_{2*}t^*_2\nea{z}{{p'_5}^*(\overline{\jj\EE}) \otimes {p'_4}^*(\jj\FF)}=
\jj(\EE_0\otimes \FF)
\label{near2}
\end{equation}
where we identify $\PP$, where right-hand sides of 
(\ref{near1}) and (\ref{near2}) are defined, with closures of our components.

Consider   the standard triangle associated with  stratification
(\ref{sing}):
\begin{equation*}
\begin{tikzcd}[cramped]
s_*s^!\nea{z}{\mathcal{A}}\arrow[r]
& \nea{z}{\mathcal{A}} \arrow[r] 
& t_*t^*\nea{z}{\mathcal{A}} \arrow[r, "+1"]
& {}
\end{tikzcd}
\end{equation*}
Thus, $\nea{z}{\mathcal{A}}$ may be presented as
\begin{equation}
\mathop{cone}(\,
\begin{tikzcd}[cramped, sep=small]
 t_*t^*\nea{z}{\mathcal{A}} \arrow[r]
& s_*s^!\nea{z}{\mathcal{A}}[1]\end{tikzcd}
\,),
\label{cone}
\end{equation}
where
\begin{equation}
 t_*t^*\nea{z}{\mathcal{A}}= t_{1*}t_1^*\nea{z}{\mathcal{A}}\,\oplus\, t_{2*}t_2^*\nea{z}{\mathcal{A}}
\label{sum}
\end{equation}

Recall that we want to build a map from $\Van{{1-z}}{\EE}\otimes\Van{{1-z}}{\FF}$ 
to  $\Nea{{z}}{\con{\EE}{\FF}}$. To do it, we will build 
extensions of $\underline{\mathbb{Z}}$ by 
$t_{1*}t_1^*\nea{z}{\mathcal{A}}$\and $t_{2*}t_2^*\nea{z}{\mathcal{A}}$
and ensure that their projections on $s_*s^!\nea{z}{\mathcal{A}}[1]$ in (\ref{cone})
coincide.

Combining (\ref{near1}) and (\ref{near2}) with Proposition \ref{phi},
we get
\begin{equation}
\begin{gathered}
\Ho{-1}(\PP,\, t_{1*}t^*_1\nea{z}{\mathcal{A}})=\Van{1-z}{\EE\otimes \FF_0}=\Van{1-z}{\EE}\otimes\Nea{z}{ \FF}\\
\Ho{-1}(\PP,\, t_{2*}t^*_2\nea{z}{\mathcal{A}}) =\Van{1-z}{\EE_0\otimes \FF}=\Nea{z}{\EE}\otimes\Van{1-z}{ \FF}
\end{gathered}
\label{gath}
\end{equation}
where the second equalities are due to the fact that
$\EE_0$ and $\FF_0$ are smooth at $1$ and the corresponding nearby
cycles are their shifted fibers there.

Given $u \otimes v \in \Van{{1-z}}{\EE}\otimes\Van{{1-z}}{\FF}$ ,
using isomorphisms (\ref{gath}) we get elements
\begin{equation}
\begin{gathered}
v\otimes (\Hol{I} (\var( u)))\in \Van{1-z}{\EE}\otimes\Nea{z}{ \FF}=\Ho{-1}(\PP,\, t_{1*}t^*_1\nea{z}{\mathcal{A}})\\
(\Hol{I} (\var( v)))\otimes u \in\Nea{z}{\EE}\otimes\Van{1-z}{ \FF}=\Ho{-1}(\PP,\, t_{2*}t^*_2\nea{z}{\mathcal{A}})
\end{gathered}
\label{gath2}
\end{equation}
where isomorphisms $\Hol{I}\colon \Nea{1-z}{-}\to \Nea{z}{-}$ are as in Proposition \ref{hol}.
Taking the difference between the element given by the first and second lines of
(\ref{gath2}) and taking into account (\ref{sum}) we get an element of
$\Ho{-1}(t_*t^*\nea{z}{\mathcal{A}})$.

To fulfill the construction, we need to check that projections on $s_*s^!\nea{z}{\mathcal{A}}[1]$ in (\ref{cone})
of extensions  given by the first and second lines of
(\ref{gath2}) coincide. To do it, on need to notice,
that these projections are given by the second arrow of the triangle (\ref{zero})
and apply Proposition \ref{frame}.

%
%
%
%

\begin{prop}
The map from $\Van{{1-z}}{\EE}\otimes\Van{{1-z}}{\FF}$
to $\Nea{{z}}{\con{\EE}{\FF}}$ defined above is the composition of maps
\begin{equation}
\begin{tikzcd}[scale=05, cramped, sep=small]
\Van{}{\EE} \otimes \Van{}{\FF}\arrow[r, "\ph{I}\otimes\ph{I}\!\!", "\sim"'] 
& \Ho{0}(\PP,\, \jj \FF)\otimes\Ho{0}(\PP,\, \jj \EE) \arrow[r,"\mbox{\tiny Prop.\ref{conv2}}",  "\sim"'] 
& \Ho{0}(\PP,\, \jj( \con{\EE}{\FF})) \arrow[d]\\
\Nea{{z}}{\con{\EE}{\FF}}
&\Van{{z}}{j_{1*}j_1^*(\con{\EE}{\FF})} \arrow[l, "\sim", "var_0"']
&\hspace{-.7cm}\Ho{0}(\PP,\, \mathop{j_{1*}j_1^*j_{\infty!}j_{0*} }(\con{\EE}{\FF}))
 \arrow[l, start anchor={[shift={(-17pt,0pt)}]}, 
      end anchor={[shift={(0pt,0pt)}]} , "\ph{I}^{-1}"', "\sim"]
\end{tikzcd}
\label{map1}
\end{equation}
where all arrows, except the vertical one, are isomorphisms, and the vertical 
arrow is the canonical map.
\label{Map1}
\end{prop}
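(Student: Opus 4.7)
The plan is to compute both maps on the singular fiber $p_3^{-1}(0)$ of $p'_3\colon\Ms{5}'\to\Ms{4}$ via proper base change, and match them term by term. Once both the explicit construction of Section 3.2 and the composition (\ref{map1}) are expressed in these terms, their equality should follow from Propositions \ref{hol}, \ref{nea2} and \ref{frame}.

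First I would invoke Proposition \ref{conv1} and proper base change along $p'_3$ to identify $\Nea{z}{\con{\EE}{\FF}}$ with an appropriate cohomology of $\nea{z}{\mathcal{A}}$ on $p_3^{-1}(0)$. By construction, the explicit map of Section 3.2 is already expressed in these terms via the triangle (\ref{cone}) and the decomposition (\ref{sum}); the real task is therefore to show that the composition (\ref{map1}) yields the same class in $\Ho{-1}(p_3^{-1}(0),\, t_*t^*\nea{z}{\mathcal{A}})$ and satisfies the same compatibility at the node.

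Second, I would unpack (\ref{map1}) step by step on the singular fiber. Under base change and the Leray collapse of Proposition \ref{conv2}, the image of $u\otimes v$ under $\ph{I}\otimes\ph{I}$ followed by the Künneth isomorphism is the Künneth lift of the classes $\ph{I}(u)$ and $\ph{I}(v)$ to the two components, identified via (\ref{near1}), (\ref{near2}) and Proposition \ref{phi}. The canonical map to $j_{1*}j_1^*\jj(\con{\EE}{\FF})$ composed with $\ph{I}^{-1}$ is, by Proposition \ref{nea2}, the effect of $var$ at $1$ on the relevant tensor factor, and the subsequent $var_0^{-1}$ turns this, via Proposition \ref{hol}, into the holonomy $\Hol{I}$. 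The resulting classes on the two components are exactly those recorded in (\ref{gath2}).

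The main obstacle is matching the node compatibility. For (\ref{map1}) it is automatic, since the class originates in the single group $\Ho{0}(\PP,\jj(\con{\EE}{\FF}))$ before any singular-fiber decomposition, whereas for the explicit construction it is imposed by Proposition \ref{frame}. Reconciling the two requires tracing how the standard triangle at the node interacts with the base change along $p'_3$ and with the Milnor triangles at $z=1$ on each component: this is precisely what Proposition \ref{frame} encodes, so the final step reduces to a diagram chase combining Propositions \ref{frame}, \ref{hol} and \ref{nea2}.
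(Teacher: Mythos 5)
Your proposal is correct and is essentially the argument the paper intends: the paper's own proof of Proposition \ref{Map1} is just ``follows directly from definitions and is left to the reader,'' and your outline carries out exactly that unwinding --- computing both sides on the singular fiber $p_3^{-1}(0)$ via base change, identifying the component classes with (\ref{gath2}) through Propositions \ref{hol} and \ref{nea2}, and settling the node compatibility with Proposition \ref{frame}. Your observation that the compatibility at the node is automatic for the composition (\ref{map1}), because the class comes from a single global class in $\Ho{0}(\PP,\,\jj(\con{\EE}{\FF}))$, is precisely the point that makes the comparison work.
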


\begin{proof}
The statement follows directly from definitions and is left to the reader.
\end{proof}


\subsection{Nearby and vanishing cycles of convolution at 1  via the Verdier specialization}
\label{blowup}

Analyzing fibers of maps $p_3$ and $p_3'$ over $1$
one may identify vanishing cycles of convolution
with sections of the products of Verdier specializations of sheaves.
Below, we calculate vanishing cycles at $1$ and its image in nearby cycles of  convolution
of perverse sheaves from $\Shs$ without section with support  at $1$.

The fiber $p^{-1}_3(1)$ is  the nodal stable curve consisting of two components. 
Let in
\begin{equation}
s\!:\, * \to p_3^{-1}( 1) \leftarrow \mathbb{A}^1 \cup \mathbb{A}^1 \, :\! t\cup t'
\end{equation}
$s$ be the embedding of the singular point, corresponding to the stable curve of type $(12)(3)(45)$, $t$ be the embedding of the smooth part of the
component $(123)(45)$, and  $t'$ be the embedding of the smooth part of the component $(12)(345)$.

Introduce the coordinates on the compactification of the first and the second component as follows (see the right part of the picture at the end of the text):
\begin{equation*}
(0, \infty, 1)= ((13)(2)(45), (23)(1)(45) , (12)(3)(45))
\end{equation*}
and:
\begin{equation*}
(0, \infty, 1)= ( (12)(4)(35), (12)(3)(45),  (12)(5)(34))
\end{equation*}

The fiber $p^{\prime-1}_3(1)$ is the projective line. The natural projection 
$p^{-1}_3(1)\to p^{\prime-1}_3(1)$ blows down the component $(12)(345)$ and is an isomorphism on another. 
Keep notations $s$ and $t$ for the embeddings of  point $(12)(3)(45)$ and
its complement  in $p^{\prime-1}_3(1)$ :

\begin{equation}
s\!:\, * \to p_3^{\prime-1}( 1) \leftarrow \mathbb{A}^1  \, :\! t
\label{sing2}
\end{equation}

Denote $p_5^*(\overline{\jj\EE}) \otimes p_4^*(\jj\FF)$ also by $\mathcal{A}$.

As above, for sheaves from $\Shs$ without section with support at $1$,
we consider vanishing cycles at $1$ as a subspace of nearby cycles.
Thus, as it follows from the Milnor triangle
(\ref{var}), to build  an element from $\Van{1-z}{\con{\EE}{\FF}}$
one need to build an element from $\Nea{1-z}{\con{\EE}{\FF}}=\Ho{-1}(p_3^{\prime-1}( 1), \nea{1-z}{\mathcal{A}})$,
which vanishes under the connecting morphism of the cohomology
of the Milnor triangle to 
$\Ho{1}(i_1^! (\con{\EE}{\FF}))$.
We  claim that if $\EE$ or $\FF$ has no sections with support at $1$,
for this purpose one can take $v \in \Ho{-1}(p_3^{\prime-1}( 1), \nea{1-z}{\mathcal{A}})$ such that
\begin{equation}
\Ho{-1}(p_3^{\prime-1}( 1),\, t^*\nea{1-z}{\mathcal{A}}) \ni t^*v=0
\label{vanish}
\end{equation}
Indeed, let
$f\colon  p_3^{\prime-1}( 1) \to \Ms{5}'$ be the embedding  and
 consider the triangle associated with stratification (\ref{sing2})
for $f^!\mathcal{A}$:
\begin{equation*}
\begin{tikzcd}[cramped]
s_*s^!f^!{\mathcal{A}}\arrow[r]
& f^!{\mathcal{A}}\arrow[r] 
& t_*t^*f^!{\mathcal{A}}\arrow[r, "+1"]
& {}
\end{tikzcd}
\end{equation*}
Calculate its cohomology:
\begin{equation*}
\begin{tikzcd}[cramped ,sep=small]
\Ho{0}(s^!f^!{\mathcal{A}}) \arrow[r]\arrow[d, equal]
& \Ho{0}(p_3^{\prime-1}( 1), \,f^!{\mathcal{A}}) \arrow[r] 
& \Ho{0}(\mathbb{A}^1,\,t^*f^!{\mathcal{A}}) \arrow[d, equal]\\
\Ho{0}(i_1^!\EE)\otimes \Ho{0}(i_1^!\FF)
& {}
& \Ho{-1}(\mathbb{A}^1,t^*\nea{1-z}{\mathcal{A}})
\end{tikzcd}
\label{tri}
\end{equation*}
Here, the left vertical isomorphism is the K\"unneth theorem
and this term vanishes, because $\EE$ or $\FF$ has no sections with support at $1$. The right isomorphism follows from the fact that
${\mathcal{A}}$ is smooth in the neighborhood
of the image of $t$, hence  $\van{1-z}{\mathcal{A}}$ vanishes there.
Consider the image of $v$ under the connecting isomorphism in the middle term of
(\ref{tri}). Its image under the right arrow vanishes,
because it is equal to $t^*v$, which vanishes by (\ref{vanish}).
The left term is zero, which implies that the image is zero itself.

In order to build an element satisfying condition (\ref{vanish}),
express nearby cycles of $\con{\EE}{\FF}$ in terms of the special
fiber of the projection $p_3$ rather than $p_3'$.
Denote by $\overline{t}\colon \PP \to p_3^{-1}$ the closed embedding of the closure
of the image of $t$. Consider the stratification
\begin{equation}
\overline{t}\!:\, \PP \to p_3^{-1}( 1) \leftarrow \mathbb{A}^1 \, :\!  t'
\end{equation}
and the corresponding triangle for nearby cycles on $p_3^{-1}(1)$
\begin{equation}
\begin{tikzcd}[cramped]
t'_!t^{\prime *}\nea{1-z}{\mathcal{A}}\arrow[r]
& \nea{1-z}{\mathcal{A}} \arrow[r] 
& \overline{t}_*\overline{t}^*\nea{z}{\mathcal{A}} \arrow[r, "+1"]
& {}
\label{tt}
\end{tikzcd}
\end{equation}
For an element of $\Ho{-1}(\PP,\, t'_!t^{\prime *}\nea{1-z}{\mathcal{A}}) $, where $\PP$ denotes the closure of image of
$t'$, consider its image under the first arrow in (\ref{tt}).
Since cohomology of nearby cycles on $p_3^{-1}( 1)$ and $p_3^{\prime-1}( 1)$
coincide and give $\Nea{{1-z}}{\con{\EE}{\FF}}$, the image gives an element of 
$ \Ho{-1}(p_3^{\prime-1}( 1),\, \nea{1-z}{\mathcal{A}})$. We claim that this 
element satisfies condition (\ref{vanish}). Indeed, one may see that
$ t^*\nea{1-z}{\mathcal{A}}$ on both $p_3^{-1}( 1)$ and $p_3^{\prime-1}( 1)$
are canonically isomorphic. But $t^*$ on $p_3^{-1}( 1)$ factors through $\overline{t}^*$,
which annihilates the constructed element due to the exactness of (\ref{tt}).
Thus, an element from  $\Ho{-1}(\PP,\, t'_!t^{\prime *}\nea{1-z}{\mathcal{A}}) $
produces an element in nearby cycles of convolution, which may be lifted
to vanishing cycles.

\begin{prop}
For $\EE$ and $\FF$  from $\Shs$ without section with support at $1$, 
the defined above map to  vanishing cycles of $\con{\EE}{\FF}$
lifted from the map to nearby cycles
\begin{equation}
\begin{tikzcd}[cramped]
{}
& \Ho{-1}(\PP,\, t'_!t^{\prime *}\nea{1-z}{p_5^*(\overline{\jj\EE}) \otimes p_4^*(\jj\FF)})  \arrow[d]\arrow[ld, dashed]\\
\Van{{1-z}}{\con{\EE}{\FF}} \arrow[r, "var"]
&\Nea{{1-z}}{\con{\EE}{\FF}}
\end{tikzcd}
\end{equation}
is an embedding.
\label{Ver1}
\end{prop}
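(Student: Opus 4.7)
The plan has two stages. First, reduce injectivity at the level of vanishing cycles to injectivity at the level of nearby cycles. Second, establish the latter by a Verdier--specialization identification of the source, parallel to the analysis done for $p_3^{-1}(0)$ in the preceding subsection.

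For the reduction: since $\EE$ or $\FF$ has no sections with support at $1$, Proposition \ref{support} gives the same property for $\con{\EE}{\FF}$, so $var : \Van{{1-z}}{\con{\EE}{\FF}} \hookrightarrow \Nea{{1-z}}{\con{\EE}{\FF}}$ is injective. The lift constructed in the text is characterized as the unique preimage under $var$, so its injectivity is equivalent to injectivity of the map
$$\Ho{-1}(\PP,\, t'_!t^{\prime *}\nea{1-z}{\mathcal{A}}) \longrightarrow \Ho{-1}(p_3^{-1}(1),\,\nea{1-z}{\mathcal{A}})$$
induced by the first arrow of (\ref{tt}). Equivalently, using the long exact cohomology sequence of (\ref{tt}), one wants the connecting map from $\Ho{-2}(p_3^{-1}(1),\, \overline{t}_*\overline{t}^*\nea{1-z}{\mathcal{A}})$ to vanish.

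For the injectivity into nearby cycles, I would identify the source with a tensor product built from Verdier specializations at $1$. Using the coordinates $(0,\infty,1)=((12)(4)(35),(12)(3)(45),(12)(5)(34))$ on the component $(12)(345)$ and a local normal-form analysis of $\mathcal{A}=p_5^*(\overline{\jj\EE})\otimes p_4^*(\jj\FF)$ near this component (which is blown down by $p_3'$), one expresses $t^{\prime *}\nea{1-z}{\mathcal{A}}$ in terms of the Verdier specializations $\EE_1, \FF_1$ of $\EE, \FF$ at $1$, viewed as shifted local systems on the tangent $\Gm$. Applying Proposition \ref{phi} to the resulting $\jj$-extension on the closure of the component, one obtains an explicit description of $\Ho{-1}(\PP,\, t'_!t^{\prime *}\nea{1-z}{\mathcal{A}})$. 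The map into $\Nea{1-z}{\con{\EE}{\FF}}$ is then shown to match a natural K\"unneth-style embedding provided by Propositions \ref{conv2} and \ref{phi}; injectivity follows.

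The main obstacle is executing this Verdier--specialization identification on the component $(12)(345)$: determining the correct extension types ($!$ versus $*$) at its three special boundary points, and verifying that the resulting map coincides with the K\"unneth morphism. Unlike the $0$-fiber case, the whole component $(12)(345)$ lies inside $p_3^{-1}(1)$, so the local picture is that of a blown-up boundary rather than a node, and the analysis near the singular point $(12)(3)(45)$ (where the no-sections-at-$1$ assumption enters via Proposition \ref{support}) must be carried out separately to ensure that the contribution of the other component $(123)(45)$ does not spoil injectivity.
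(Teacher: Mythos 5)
Your first stage is right and matches what the construction preceding the proposition requires: by Proposition \ref{support} the sheaf $\con{\EE}{\FF}$ has no sections with support at $1$, so $\var$ is injective, the lift is unique, and the whole question reduces to injectivity of $\Ho{-1}(\PP,\, t'_!t^{\prime*}\nea{1-z}{\mathcal{A}})\to \Ho{-1}(p_3^{-1}(1),\,\nea{1-z}{\mathcal{A}})$, i.e.\ to the vanishing of the connecting map out of $\Ho{-2}(p_3^{-1}(1),\, \overline{t}_*\overline{t}^*\nea{1-z}{\mathcal{A}})$ in the long exact sequence of (\ref{tt}). That is exactly the right target.

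The gap is that you never establish this vanishing. Having named the obstruction, you pivot to the \emph{other} component $(12)(345)$ and propose to identify the \emph{source} $\Ho{-1}(\PP,\, t'_!t^{\prime*}\nea{1-z}{\mathcal{A}})$ via Verdier specializations at $1$ and then match the map into $\Nea{1-z}{\con{\EE}{\FF}}$ with the K\"unneth embedding of Proposition \ref{conv2}. That identification of the source is the content of Proposition \ref{Ver2}, and the claimed matching with the K\"unneth morphism is essentially the right-hand triangle of Theorem \ref{Theorem}, which is proved later \emph{using} the present proposition; routing the proof through it is both much heavier than needed and backwards relative to the logical order, and you yourself flag it as an unexecuted ``main obstacle.'' The direct argument lives entirely on the component $(123)(45)$: in the coordinates $(0,\infty,1)=((13)(2)(45),(23)(1)(45),(12)(3)(45))$ one identifies $\overline{t}^*\nea{1-z}{\mathcal{A}}$, exactly as in (\ref{near1})--(\ref{near2}) at $0$ (or as in the computation of $t_*t^*f^!\mathcal{A}$ in the proof of Proposition \ref{support}), with an extension of the form $\jj(-)[-1]$ of a shifted perverse sheaf built from $\overline{\EE}$ and the Verdier specialization of $\FF$ at $1$; Proposition \ref{phi} then concentrates its hypercohomology in degree $-1$, so $\Ho{-2}(\PP,\,\overline{t}^*\nea{1-z}{\mathcal{A}})=0$ and the connecting map vanishes. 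Without this (short) step your proposal does not prove the embedding.
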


\begin{proof}
The statement follows directly from definitions and is left to the reader.
\end{proof}

Now calculate the domain of the map from the  proposition above.

For $\EE, \FF\in\Shs$ denote by
$\EE_1$ and $\FF_1$ the shifted vector bundles on
$\mathbb{A}^1$ which are Verdier specializations (\cite{Ver83})
of $\EE$ and $\FF$ along point $1$ after identification of the tangent space to $\PP$
at $1$ with $\mathbb{A}^1=\PP\setminus\infty$ in the natural way.

One may see that  in coordinates as above
$t^{\prime *}\nea{1-z}{{p'_5}^*(\overline{\jj\EE}}$ is isomorphic to $r^*\EE_1$ and 
$t^{\prime *}\nea{1-z}{ {p'_4}^*(\jj\FF))}$ is isomorphic to $\FF_1$,
where $r$ is defined in (\ref{reflexion}).
Thus, 
\begin{equation}
t^{\prime *}\nea{1-z}{\mathcal{A}}=r^*\EE_1\otimes\FF_1
\label{refl}
\end{equation}

\begin{prop}
For $\EE$ and $\FF$  from $\Shs$ without section with support at $1$, 
\begin{equation}
\Ho{0}(\PP,\, j^\infty_!(r^*\EE_1\otimes\FF_1)[-1])= \Van{{1-z}}{\EE}\otimes\Van{{1-z}}{\FF}
\label{Ver2f}
\end{equation}
\label{Ver2}
\end{prop}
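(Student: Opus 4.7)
The plan is to reduce the computation to Proposition \ref{phi} applied to the restriction $\GG := (r^*\EE_1 \otimes \FF_1)[-1]|_{\Gm}$, and then identify $\Van{1-z}{\GG}$ with $V_E \otimes V_F$.

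First I would check that $\GG$ lies in $\Shs$. Since $r$ swaps $0$ and $1$, the pullback $r^*\EE_1$ is smooth on $\mathbb{A}^1\setminus\{1\}$, while $\FF_1$ is smooth on $\Gm\subset\mathbb{A}^1$; hence $\GG$ is smooth on $\Gm\setminus\{1\}$, and the shift $[-1]$ makes it perverse. Moreover, $r^*\EE_1$ is smooth at $0$ (because $r(0)=1$ is a smooth point of $\EE_1$), while the Verdier specialization $\FF_1$ inherits from $\FF$ a $j_{0*}$-extension structure at $0\in T_1\PP$ (this is where the absence of sections supported at $1$ enters). Combined via the projection formula, this gives $(r^*\EE_1\otimes\FF_1)[-1]|_{\mathbb{A}^1}=j_{0*}\GG$, and hence $j^\infty_!(r^*\EE_1\otimes\FF_1)[-1]=\jj\GG$. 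Proposition \ref{phi} then yields
\[
\Ho{0}(\PP,\,j^\infty_!(r^*\EE_1\otimes\FF_1)[-1])=\Van{1-z}{\GG}.
\]

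Next I would compute $\Van{1-z}{\GG}$ at the point $1\in\Gm$. Since $\FF_1$ is smooth there, vanishing cycles of a tensor product factor as
\[
\Van{1-z}{\GG}|_1=\Van{1-z}{r^*\EE_1}|_1\otimes(\FF_1)_1.
\]
The first factor equals $V_E$ by the basic compatibility of Verdier specialization with vanishing cycles: one has $\Van{z}{\EE_1}|_0=\Van{1-z}{\EE}=V_E$, and pulling back by $r$ (which swaps $0\leftrightarrow 1$) gives $\Van{1-z}{r^*\EE_1}|_1=V_E$. Applying the analogous specialization property to $\FF$ identifies the stalk $(\FF_1)_1$ with $V_F$. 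Combining the two factors gives the asserted isomorphism $\Van{1-z}{\GG}=V_E\otimes V_F$.

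The main obstacle is the precise identification of $(\FF_1)_1$ with $V_F=\Van{1-z}{\FF}$, rather than with the nearby cycles $\Nea{1-z}{\FF}$ which is the default fiber of the Verdier specialization on $\Gm\subset T_1\PP$. This uses the hypothesis that $\FF$ has no sections supported at $1$: by the Milnor triangle (\ref{var}), the variation map $\var:V_F\to\Nea{1-z}{\FF}$ is then injective and the extension structure of $\FF_1$ (as inherited from $\FF$) makes its smooth-fiber contribution at $1$ agree with the image of $\var$, namely $V_F$. Spelling this out carefully via the quiver description of perverse sheaves in $\Shs$, together with the fact that both sides have the same dimension by Remark \ref{purite}, is the bulk of the work.
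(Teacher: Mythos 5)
Your reduction breaks down at its first step, and the difficulty you flag at the end is not a finishing detail but the reason the reduction cannot work. The sheaf $j^\infty_!(r^*\EE_1\otimes\FF_1)[-1]$ is \emph{not} of the form $\jj\mathcal{G}$ for $\mathcal{G}$ its restriction to $\Gm$: the Verdier specialization $\FF_1$ reproduces at $0\in T_1\PP$ the extension type of $\FF$ at $1$, and the hypothesis $H^0(i_1^!\FF)=0$ only says that $\var$ is injective, not that $i_1^!\FF$ vanishes. The middle extension --- one of the paper's two standard examples of a sheaf without sections supported at $1$ --- satisfies the hypothesis but is not a $*$-extension, so Proposition \ref{phi} does not apply to your $\mathcal{G}$. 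Nor is this repairable by a more careful identification at the point $1$: if the left-hand side really were $\Van{{1-z}}{\mathcal{G}}$, then, since $\FF_1$ is a shifted local system near $1$ with stalk $\Nea{{1-z}}{\FF}$, you would obtain $\Van{{1-z}}{\EE}\otimes\Nea{{1-z}}{\FF}$, which has the wrong dimension whenever $\var$ is injective but not surjective. Your proposed fix --- that ``the extension structure of $\FF_1$ makes its smooth-fiber contribution at $1$ agree with the image of $\var$'' --- has no content: the stalk of a local system at a smooth point is what it is and cannot be altered by extension data at another point.

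The paper's proof never passes through vanishing cycles of the product at $1$ alone. It applies the shrinking Proposition \ref{basic} directly to $r^*\EE_1\otimes\FF_1$ and computes $H^0(I,\,i_I^!(-))$ via the stratification $(\{0,1\},(0,1))$ of $I$, so that \emph{both} endpoints contribute costalk terms: at $1$ the costalk of $r^*\EE_1$ tensored with the stalk of $\FF_1$, and at $0$ the stalk of $\EE_1$ tensored with the costalk of $\FF_1$. It is precisely this second term that cuts $\Nea{{1-z}}{\FF}$ down to $\Van{{1-z}}{\FF}$, via the two-term complex (\ref{complex}) representing vanishing cycles. The resulting cone differs from the tensor product of the two two-term complexes computing $\Van{{1-z}}{\EE}$ and $\Van{{1-z}}{\FF}$ by $\Ho{\bullet}(i^!\EE)\otimes\Ho{\bullet}(i^!\FF)$, and only here does the hypothesis of no sections with support at $1$ enter: it guarantees this discrepancy contributes nothing to $H^0$. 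To salvage your outline you would have to retain the contribution of the endpoint $0$ of $I$ instead of discarding it by declaring the extension there to be $j_{0*}$.
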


\begin{proof}
Apply Proposition \ref{basic} to calculate the left-hand side of (\ref{Ver2f}).
The standard triangle for the local cohomology on $I$
associated with the stratification
$(\{0, 1\}, I)$
presents it as a complex in the derived category, that is a cone of
\begin{equation}
\begin{tikzcd}[cramped, sep=small]
\Nea{}{\EE_1}\otimes\Nea{}{\FF_1}\arrow[r]
& (\Ho{\bullet}(i_0^!\EE_1)[2]\otimes\Nea{}{\FF_1})\oplus (\Nea{}{\EE_1}\otimes\Ho{\bullet}(i_1^!\FF_1)[2])
\end{tikzcd}
\label{tens}
\end{equation}
Here, we use the identification of the cohomology 
the Milnor triangle (\ref{var}) with the complex given by stratification mentioned in the proof of Proposition \ref{phi}.
Analogously, applying  the canonical isomorphisms 
between nearby and vanishing cycles of perverse sheaves
and their Verdier specializations, one  may present 
the right-hand side of (\ref{Ver2f}) as a product of complexes
\begin{equation*}
\begin{tikzcd}[cramped, sep=small]
\Nea{}{\EE_1}\arrow[r]
& \Ho{\bullet}(i_0^!\EE_1)[2] 
\end{tikzcd} 
\quad \mbox{and} \quad
\begin{tikzcd}[cramped, sep=small]
\Nea{}{\FF_1}\arrow[r]
& \Ho{\bullet}(i_1^!\FF_1)[2]
\end{tikzcd}
\end{equation*}
One may see that there is a natural map from this tensor product
to (\ref{tens}), and the cone of this map is $\Ho{\bullet}(i_0^!\EE)[2]\otimes \Ho{\bullet}(i_1^!\FF)[2]$.
It follows that this map induces an isomorphism on the 0th cohomology,
because
$\EE$ and $\FF$ have no section with support at $1$.
\end{proof}

Combining Propositions \ref{Ver1} and \ref{Ver2} and (\ref{refl}),
we get for  perverse sheaves $\EE, \FF \in \Shs$ without section with support  at $1$ an isomorphism
\begin{equation}
\Van{{1-z}}{\EE}\otimes\Van{{1-z}}{\FF}=\Van{{1-z}}{\con{\EE}{\FF}} 
\label{map2}
\end{equation}

\begin{rem}
The left-hand side of (\ref{Ver2f}) is  the fiber at $1$ of the additive convolution of
Verdier specializations $\EE_1$ and $\FF_1$, which is isomorphic to
the nearby cycles of the additive convolution, which  in turn
is isomorphic to vanishing cycles under condition $\Ho{0}(i_0^!\EE_1)=\Ho{0}(i_0^!\FF_1)=0$, compare with the end of the first letter \cite{DLet}.
\end{rem}

\section{Vanishing cycles and polygons}
\label{poly}

\subsection{Graphical representation}

Isomorphism $\phi_I$ between vanishing cycles
$\Van{{1-z}}{\FF}$ and cohomology $\Ho{0}(\PP,\,\allowbreak \jj \FF)$
for $\FF\in \Shs$
supplied by Proposition \ref{phi} may be realized as
 a graphical representation of vanishing cycles in the following sense.
One associates a cocycle in 
$H^0(I,\, i_I^! \FF) with a vanishing cycle $ and then takes the pushforward to $\PP$ of it.
 Thus, one may think about interval $I$ and the cocycle
of the local cohomology with support on it as a graphical 
representation of the vanishing cycle.

Let us construct the graphical presentation of vanishing cycles of the convolution. To be more precise, we will construct its image
in  nearby cycles.

For two perverse sheaves $\EE, \FF\in \Shs$ take the external product
of the corresponding cocycles in $H^0_I(\PP,\,  -)$. It gives a cocycle with support on
the square in $\Gm\times\Gm$. The projection of this square under multiplication
is $I$. The preimage of this projection over an inner point of $I$
is the interval $[\lambda_1, \lambda_2]$ for 
$0<\lambda_1<\lambda_2<1$. The cocycle representing the cohomology with support on this interval, which we identify with $I$, is presented by the section of the restriction
of $\EE\boxtimes\FF$ on this interval, which is the product
of restrictions of factors. Note that restrictions
of this product at the ends of this interval vanish.

The picture at the end of the text could be helpful at this point,
 one should imagine that the right vertical side of
the pentagon-like gray figure is shrunk, 
making the pentagon in a quadrupole.

Thus, we get a  section of the local system $\Rde^1m_! (\EE[-1]\boxtimes\FF[-1])$ over inner points of $I$, 
which is isomorphic to  $\con{\EE}{\FF}$ there. After the pushforward,
 we get a class in $\Ho{0}(\PP,\, j_{1*}j_1^*(\con{\EE}{\FF}))$.
Combining this morphism with $\phi_I$, we get a class in
$\Van{{1-z}}{j_{1*}j_1^*(\con{\EE}{\FF})}=\Nea{{1-z}}{\con{\EE}{\FF}}$.

\begin{prop}
For $\EE$ and $\FF$  from $\Shs$ without section with support at $1$, 
 the defined above map 
\begin{equation}
\begin{tikzcd}[cramped, sep=small]
\Ho{0}(\PP,\, \jj\EE)\otimes \Ho{0}(\PP,\,\jj\FF)\arrow[r]
& \Nea{{1-z}}{\con{\EE}{\FF}}
\end{tikzcd}
\end{equation}
factors through
\begin{equation}
\begin{tikzcd}[cramped, sep=small]
\var\colon
\Van{{1-z}}{\con{\EE}{\FF}}\arrow[r]
& \Nea{{1-z}}{\con{\EE}{\FF}}
\end{tikzcd}
\end{equation}
and is an isomorphism on the image, which is
$\Van{{1-z}}{\con{\EE}{\FF}}$.
\label{square}
\end{prop}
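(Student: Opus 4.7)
The strategy is to identify the graphical map with the composite
\begin{equation*}
\Ho{0}(\PP,\,\jj\EE)\otimes\Ho{0}(\PP,\,\jj\FF)\;\xrightarrow{\ph{I}^{-1}\otimes\ph{I}^{-1}}\;\Van{{1-z}}{\EE}\otimes\Van{{1-z}}{\FF}\;\xrightarrow{(\ref{map2})}\;\Van{{1-z}}{\con{\EE}{\FF}}\;\xrightarrow{\var}\;\Nea{{1-z}}{\con{\EE}{\FF}},
\end{equation*}
where the first arrow is the inverse of $\ph{I}\otimes\ph{I}$ from Proposition \ref{phi} and the second is the isomorphism constructed in Section \ref{blowup}. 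Granted this identification, both the factorization claim and the surjectivity onto $\Van{{1-z}}{\con{\EE}{\FF}}$ follow immediately, since the first two arrows are already isomorphisms.

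\emph{Injectivity of $\var$.} By Proposition \ref{support}, $\con{\EE}{\FF}$ has no sections with support at $1$, so $\Ho{0}(i_1^!(\con{\EE}{\FF}))=0$; the Milnor triangle (\ref{var}) then forces $\var$ to be injective on $\Van{{1-z}}{\con{\EE}{\FF}}$. Hence the putative factorization is automatically unique, and it suffices to match the two nearby-cycle classes directly.

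\emph{Matching the two constructions.} The graphical cocycle is supported on $I\times I\subset\Gm\times\Gm$ and vanishes at the corners of this square. The interior corner $(1,1)$ is sent by multiplication to $1\in\Gm$, and in the moduli-theoretic model underlying Section \ref{blowup} it is precisely the point blown up to the $(12)(345)$-divisor of $\Ms{5}$, i.e.\ the exceptional line that is collapsed in passing to $\Ms{5}'$ and that carries the embedding $t'$. A local calculation in blow-up coordinates near this exceptional divisor shows that the restriction of the graphical cocycle to $\nea{1-z}{\mathcal{A}}$ on the fiber $p_3^{-1}(1)$ lies in $\Ho{-1}(\PP,\,t'_!t'^*\nea{1-z}{\mathcal{A}})$: the $!$-behaviour reflects the vanishing of the external product at the corners, and via (\ref{refl}) the restriction equals the pure tensor in $r^*\EE_1\otimes\FF_1$ corresponding to the chosen pair of vanishing cycles. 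Applying Propositions \ref{Ver1} and \ref{Ver2} converts this into the element of $\Van{{1-z}}{\EE}\otimes\Van{{1-z}}{\FF}$ coming from $\ph{I}^{-1}\otimes\ph{I}^{-1}$ of the graphical input, completing the match.

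The main obstacle is the blow-up coordinate computation in Step 3. One must follow $I\times I$ through the blow-up $\Ms{5}\to\Ms{4}\times\Ms{4}$ at $(1,1)$, verify that the graphical class has trivial restriction to the strict transform of the diagonal component $(123)(45)$, and check that its restriction to the exceptional $(12)(345)$ component agrees, with the correct $!$-support, with the expected element of $t'_!t'^*\nea{1-z}{\mathcal{A}}$. The hint at the end of the text---reading the picture with ``the right vertical side of the pentagon-like figure shrunk''---is the visual counterpart of this computation.
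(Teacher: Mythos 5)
Your factorization of the graphical map as $\var\circ(\ref{map2})\circ(\ph{I}^{-1}\otimes\ph{I}^{-1})$ is a genuinely different, and substantially harder, route than the paper's. The paper identifies the graphical map with the composite
\begin{equation*}
\begin{tikzcd}[cramped, sep=small]
\Ho{0}(\PP,\,\jj\EE)\otimes\Ho{0}(\PP,\,\jj\FF)\arrow[r,"(\ref{otimes})"]
&\Ho{0}(\PP,\,\jj(\con{\EE}{\FF}))\arrow[r,"\ph{I}^{-1}"]
&\Van{{1-z}}{\con{\EE}{\FF}}\arrow[r,"var"]
&\Nea{{1-z}}{\con{\EE}{\FF}},
\end{tikzcd}
\end{equation*}
i.e.\ it first passes through the K\"unneth isomorphism of Proposition \ref{conv2} and applies $\ph{I}^{-1}$ to the convolution itself. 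That identification is essentially tautological: the graphical cocycle supported on $I$ is by construction the pushforward under $m$ of the external product cocycle, which is exactly how (\ref{otimes}) acts on representatives, and $\ph{I}$ is defined through such cocycles; the only remaining point, that the canonical map $\Van{{1-z}}{\con{\EE}{\FF}}\to\Van{{1-z}}{j_{1*}j_1^*(\con{\EE}{\FF})}=\Nea{{1-z}}{\con{\EE}{\FF}}$ equals $\var$, is Proposition \ref{nea2}. No blow-up analysis is needed. Your route instead requires matching the graphical class against the Verdier-specialization construction of Subsection \ref{blowup}; the equivalence of your factorization with the paper's is precisely the right-hand triangle of Theorem \ref{Theorem}, whose proof in the paper in turn relies on Proposition \ref{square}.

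The genuine gap is that the one step carrying all the content of your argument --- the ``local calculation in blow-up coordinates'' showing that the restriction of the graphical cocycle to $p_3^{-1}(1)$ lies in $\Ho{-1}(\PP,\,t'_!t^{\prime *}\nea{1-z}{\mathcal{A}})$ and, via (\ref{refl}), equals the expected pure tensor in $r^*\EE_1\otimes\FF_1$ --- is only described, not performed; you flag it yourself as ``the main obstacle.'' Without it, neither the factorization through $\var$ nor the identification of the image is established. If you do carry it out, you will have proved the hard half of Theorem \ref{Theorem} inside this proposition, which is legitimate but inverts the paper's logical order and duplicates the shrinking argument of Propositions \ref{Ver1} and \ref{Ver2}. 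The parts you complete correctly --- injectivity of $\var$ from Proposition \ref{support} via the Milnor triangle (\ref{var}), and the reduction of the statement to an identification of two nearby-cycle classes --- coincide with the paper's.
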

\begin{proof}
By Proposition \ref{conv2}, the external product gives an isomorphism 
\begin{equation*}
\begin{tikzcd}[cramped, sep=small]
\Ho{0}(\PP,\, \jj\EE)\otimes H^0(\PP,\,\jj\FF)\arrow[r,"\sim"]
&\Ho{0}(\PP,\, \jj(\con{\EE}{\FF}))
\end{tikzcd}
\end{equation*}
By Proposition \ref{phi}, map $\phi_I$
establishes an isomorphism between  $\Ho{0}(\PP,\,\allowbreak \jj(\con{\EE}{\FF}))$  and 
$\Van{{1-z}}{\con{\EE}{\FF}}$.
Compose it with the composition
\begin{equation}
\begin{tikzcd}[cramped, sep=small]
\Van{{1-z}}{\con{\EE}{\FF}}\arrow[r]
&\Van{{1-z}}{j_{1*}j_1^*(\con{\EE}{\FF})}\arrow[r, equal]
&\Nea{{1-z}}{\con{\EE}{\FF}}
\end{tikzcd}
\label{comp}
\end{equation}
where the arrow is induced by the canonical map.
\begin{equation}
\begin{tikzcd}[cramped, sep=small]
\con{\EE}{\FF}\arrow[r]
&j_{1*}j_1^*(\con{\EE}{\FF})
\end{tikzcd}
\label{adj}
\end{equation}
By the very definition of $\phi_I$, this is 
the map constructed before the proposition.
By Proposition \ref{near2}, the composite map (\ref{comp}) is equal to $\var$.
By Proposition \ref{support}, 
$\con{\EE}{\FF}$ has no section with support at $1$, thus $\var$ is injective.
\end{proof}
%
%
%
%
%
%
%
%
%

\subsection{Compatibility}
Summarizing constructions from the previous section
with the graphical presentation introduced above, we get the following result.

\begin{theorem}
For $\EE, \FF \in \Shs$ 
without sections with support at $1$
the following diagram commutes
\begin{equation}
\begin{tikzcd}[cramped, column sep=1ex]
\Ho{0}(\PP, \, \jj j_{1*}j_1^* (\con{\EE}{\FF} ))\arrow[d, "\phi_I^{-1}\circ r^*"]
&\Ho{0}(\PP, \, \jj (\con{\EE}{\FF} ))\arrow[d, equal, "(\ref{otimes})" ]
\arrow[ddr, bend left, "\phi_I^{-1}", "\sim"'{sloped}, thin]\arrow[l, "(\ref{adj})"']
&{}\\
\Van{0}{j_{1*}j_1^*(\con{\EE}{\FF})}\arrow[d, "\var"]
&\Ho{0}(\PP,\, \jj\EE)\otimes \Ho{0}(\PP,\,\jj\FF)\arrow[d, equal, "\phi_I\otimes\phi_I"] 
&{}\\
\Nea{0}{\con{\EE}{\FF} }
&\Van{{1}}{\EE}\otimes\Van{{1}}{\FF}\arrow[l, "(\ref{map1})"'] \arrow[r, "(\ref{map2})", "\sim"']
&\Van{1}{\con{\EE}{\FF}}
\end{tikzcd}
\label{theorem}
\end{equation}
where  we denote by $\Van{{1}}{-}$, $\Van{{0}}{-}$ and $\Nea{{0}}{-}$
vanishing cycles at $1$ along $1-z$ and vanishing and nearby cycles at $0$
along $z$ correspondingly.
%
\label{Theorem}
\end{theorem}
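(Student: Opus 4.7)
The plan is to decompose the diagram~(\ref{theorem}) into two regions, each of which has already been addressed in the preceding propositions. The right-hand triangle is formed by the diagonal arrow $\phi_I^{-1}$, the right column (consisting of~(\ref{otimes}) and $\phi_I\otimes\phi_I$), and the bottom-right arrow~(\ref{map2}). The complementary left-hand pentagon has vertices
$\Ho{0}(\PP,\,\jj j_{1*}j_1^*(\con{\EE}{\FF}))$,
$\Ho{0}(\PP,\,\jj(\con{\EE}{\FF}))$,
$\Van{1}{\EE}\otimes\Van{1}{\FF}$,
$\Nea{0}{\con{\EE}{\FF}}$,
and $\Van{0}{j_{1*}j_1^*(\con{\EE}{\FF})}$.

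Commutativity of the right triangle asserts that $\phi_I^{-1}$ applied to the convolution equals the composite obtained by inverting~(\ref{otimes}) (Proposition~\ref{conv2}), applying $\phi_I\otimes\phi_I$ factorwise, and then the Verdier-specialization isomorphism~(\ref{map2}). This is precisely what Proposition~\ref{square} establishes: the square construction in $\Gm\times\Gm$ used there unpacks exactly as the outer branch of this triangle, and Proposition~\ref{square} identifies it with $\phi_I^{-1}$ on $\con{\EE}{\FF}$.

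Commutativity of the left pentagon asserts that the outer path starting at $\Ho{0}(\PP,\,\jj(\con{\EE}{\FF}))$ --- passing through the adjunction~(\ref{adj}), then $\phi_I^{-1}\circ r^*$, then $\var$ --- coincides with the inner path through the middle column and~(\ref{map1}). But this is exactly how~(\ref{map1}) is \emph{defined} in Proposition~\ref{Map1}, so this region commutes by construction. One small piece of bookkeeping is the identification of $\Ho{0}(\PP,\,\jj j_{1*}j_1^*(\con{\EE}{\FF}))$ with the expression $\Ho{0}(\PP,\,\mathop{j_{1*}j_1^*j_{\infty!}j_{0*}}(\con{\EE}{\FF}))$ appearing in Proposition~\ref{Map1}; these agree canonically by base change away from $1$, where $j_{1*}j_1^*$ commutes with the $0$- and $\infty$-extensions.

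I expect the main obstacle to be purely organizational rather than substantive: at each corner of~(\ref{theorem}) one must ensure the functorial data match those used in Propositions~\ref{Map1} and~\ref{square}, and in particular that the $\var$ in the left column coincides with the $\var_0$ of Proposition~\ref{Map1}, and that the adjunction arrow in both sources is literally~(\ref{adj}). Once these matches are in place, the theorem is the conjunction of Propositions~\ref{Map1} and~\ref{square}.
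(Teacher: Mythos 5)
Your decomposition into the right triangle and the left pentagon is the same as the paper's, and your treatment of the left pentagon --- reducing it to Proposition \ref{Map1} together with the base-change identification of $\jj j_{1*}j_1^*$ with $j_{1*}j_1^*\jj$ --- matches the paper's own one-line reduction of that region.

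The gap is in the right triangle. Proposition \ref{square} does not ``establish precisely'' its commutativity: it only identifies the graphical (section-over-$I$, external product on $I\times I$) construction with the composite $\var\circ\phi_I^{-1}\circ(\ref{otimes})$, and it says nothing about the arrow (\ref{map2}). The triangle asserts that $(\ref{map2})\circ(\phi_I\otimes\phi_I)\circ(\ref{otimes})^{-1}=\phi_I^{-1}$, and (\ref{map2}) is defined by an entirely different mechanism: the Verdier specializations $\EE_1$, $\FF_1$ placed on the exceptional component $t'$ of the blown-up fiber $p_3^{-1}(1)$, via Propositions \ref{Ver1} and \ref{Ver2}. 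So after invoking Proposition \ref{square} you still must show that the limit at $1$ of the section of $\con{\EE}{\FF}$ over $I$ --- that is, its image in $\Nea{1}{\con{\EE}{\FF}}$ --- coincides with the class produced from $u\otimes v$ by Proposition \ref{Ver2}. This is the substantive geometric step of the paper's proof: on the blown-up fiber over $1$, the right vertical side of the pentagon-like region in $\Ms{5}$ degenerates onto the line joining the points $4$ and $5$, so the limit of the $I\times I$-supported cocycle is exactly the cocycle supported on the interval used in the shrinking argument of Proposition \ref{basic} inside the proof of Proposition \ref{Ver2}. (One also needs Proposition \ref{support} to know that $\var$ is injective on $\Van{1}{\con{\EE}{\FF}}$, so that it suffices to compare the two classes after mapping to nearby cycles; this is implicit in your appeal to Proposition \ref{square} but should be said.) Without this comparison the two descriptions of the element of $\Van{1}{\con{\EE}{\FF}}$ are simply not connected, and the triangle does not follow from the propositions you cite.
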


\begin{proof}
Let us start with the right triangle of the diagram. The right isomorphism $\phi_I$ may be factored through 
\begin{equation*}
\begin{tikzcd}[cramped, sep=small]
\Van{{1-z}}{\con{\EE}{\FF}}\arrow[r]
&\Van{{1-z}}{j_{1*}j_1^*(\con{\EE}{\FF})}\arrow[r, equal]
&\Nea{{1-z}}{\con{\EE}{\FF}}
\end{tikzcd}
\end{equation*}
as in Proposition \ref{square}.
In that proposition, we present elements of
as cohomology with support on $I$, that is as a section of 
$\con{\EE}{\FF}$ over $I$. 
By the very definition of $\phi_I$ in Proposition \ref{phi}
via \cite{Galligo1985}, the
image of such a class in $\Nea{1}{\con{\EE}{\FF}}$
 is the limit of this section.
If we use the blown-up fiber over $1$ as in Subsection \ref{blowup},
on the picture at the end of the text, this section is presented 
by the right vertical side of the gray pentagon-like figure.
The limit of it is the bold line connecting $4$ and $5$ drawn on the figure presented,  the blown-up fiber over $1$. Thus, the class of nearby cycles
where $\phi_I$ landed is exactly the class, constructed in 
Proposition \ref{Ver2} by means of the shrinking arguments from Proposition \ref{basic}. 

The left part of the diagram may be treated likewise.
But this fact is essentially the content of Proposition \ref{Map1}.
\end{proof}

The theorem gives a map 
from vanishing cycles of the convolution at $1$ to its nearby cycles at $0$.
The following proposition interprets
this maps in terms of  holonomy along $I$.

%
%


\begin{prop}
In notations of Theorem \ref{Theorem} the following diagram commutes
\begin{equation}
\begin{tikzcd}[cramped]
\Nea{0}{\con{\EE}{\FF} }\arrow[d, equal]
&\Van{{1}}{\EE}\otimes\Van{{1}}{\FF}\arrow[l, "(\ref{map1})"'] \arrow[r, "(\ref{map2})", "\sim"']
&\Van{1}{\con{\EE}{\FF}}\arrow[d, "\var"]\\
\Nea{0}{\con{\EE}{\FF} }\arrow[rr, equal, "\Hol{I}"]
&{}
&\Nea{1}{\con{\EE}{\FF} }
\end{tikzcd}
\end{equation}
where the top row is the bottom row of (\ref{theorem}) and the isomorphism downstairs
is the holonomy along $I$.
\end{prop}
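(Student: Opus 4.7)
The plan is to chase through the diagram of Theorem \ref{Theorem} and then apply Proposition \ref{hol} to identify the holonomy in terms of the building blocks $\phi_I$, $r^*$ and $\mathrm{can}$. First I would take $u\otimes v\in\Van{1}{\EE}\otimes\Van{1}{\FF}$ and set $\xi:=\phi_I(u)\otimes\phi_I(v)$, viewed as an element of $\Ho{0}(\PP,\jj(\con{\EE}{\FF}))$ via Proposition \ref{conv2}. The commutativity in Theorem \ref{Theorem} then delivers the two presentations
$(\ref{map2})(u\otimes v)=\phi_I^{-1}(\xi)\in\Van{1}{\con{\EE}{\FF}}$
and
$(\ref{map1})(u\otimes v)=\var(\phi_I^{-1}(r^{*}\,\mathrm{can}(\xi)))\in\Nea{0}{\con{\EE}{\FF}}$,
where $\mathrm{can}$ denotes the canonical map induced by (\ref{adj}).

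Next I would apply $\var$ to $(\ref{map2})(u\otimes v)$. By Proposition \ref{nea2}, $\phi_I\circ\var=\mathrm{can}\circ\phi_I$, so $\var\circ\phi_I^{-1}=\phi_I^{-1}\circ\mathrm{can}$, giving $\var((\ref{map2})(u\otimes v))=\phi_I^{-1}(\mathrm{can}(\xi))$ in $\Van{1-z}{j_{1*}j_1^*(\con{\EE}{\FF})}$. I identify this space with $\Nea{1}{\con{\EE}{\FF}}$ via $\var$, which is an isomorphism since $i_1^!j_{1*}j_1^*(\con{\EE}{\FF})=0$ (and by Proposition \ref{support}, $\con{\EE}{\FF}$ itself has no sections at $1$).

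Then I would apply $\Hol{I}$ to $(\ref{map1})(u\otimes v)$. Applying Proposition \ref{hol} to the sheaf $j_{1*}j_1^*(\con{\EE}{\FF})$, the holonomy $\Hol{I}\colon\Nea{0}{\con{\EE}{\FF}}\to\Nea{1}{\con{\EE}{\FF}}$ (i.e.\ the inverse of the one in that proposition) is the composition $\var\circ\phi_I^{-1}\circ(r^{*})^{-1}\circ\phi_I\circ\var^{-1}$. Plugging in the expression for $(\ref{map1})(u\otimes v)$, the middle $\phi_I\circ\phi_I^{-1}$ collapses and the two copies of $r^{*}$ cancel (using that $r$ is an involution, so $r^{*}\circ r^{*}=\mathrm{id}$), leaving $\var(\phi_I^{-1}(\mathrm{can}(\xi)))$, which is the same element of $\Nea{1}{\con{\EE}{\FF}}$ as before under the canonical identification.

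The main obstacle I foresee is the bookkeeping of directions: Proposition \ref{hol} states the holonomy in the direction $\Nea{1}\to\Nea{0}$, whereas our proposition needs the reverse direction, and one must verify that the $r^{*}$ introduced in the formula for $(\ref{map1})$ via the theorem and the $(r^{*})^{-1}$ produced by inverting Proposition \ref{hol} indeed cancel rather than compose nontrivially; this is what the involution $r\circ r=\mathrm{id}$ provides. A related minor subtlety is ensuring that the two occurrences of the identification $\Nea{1}{\con{\EE}{\FF}}\cong\Van{1-z}{j_{1*}j_1^*(\con{\EE}{\FF})}$ (on the two sides of the diagram) coincide, which again relies on Proposition \ref{support}.
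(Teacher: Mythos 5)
Your proposal is correct and follows essentially the same route as the paper: both maps out of $\Van{{1}}{\EE}\otimes\Van{{1}}{\FF}$ are factored through $\Ho{0}(\PP,\,\jj(\con{\EE}{\FF}))$ and its image in $\Ho{0}(\PP,\, \jj j_{1*}j_1^*(\con{\EE}{\FF}))$, after which the identity reduces to Proposition \ref{hol} applied to $j_1^*(\con{\EE}{\FF})$. Your version merely makes explicit the diagram chase (via Proposition \ref{nea2} and the cancellation of the two copies of $r^*$) that the paper compresses into a reference to the construction preceding Proposition \ref{square}.
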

\begin{proof}
By the very construction preceding Proposition \ref{square}, images of maps from $\Van{{1}}{\EE}\otimes\Van{{1}}{\FF}$
to nearby cycles  coincide with the ones of
$\Ho{0}(\PP,\, j_{1*}j_1^*(\con{\EE}{\FF}))$ under isomorphisms $\phi_I$.
Then, the statement  is a corollary of Proposition \ref{hol}.
\end{proof}

The following picture illustrates the proposition above.
The gray pentagon-like figure is the graphical representation
of the cohomology of the convolution. It may be thought of as well
as a trace of a graphical representation of a cohomology class in the fiber
when transporting along $I$. 

\clearpage
 
\medskip
\begin{figure}[h]
\includegraphics{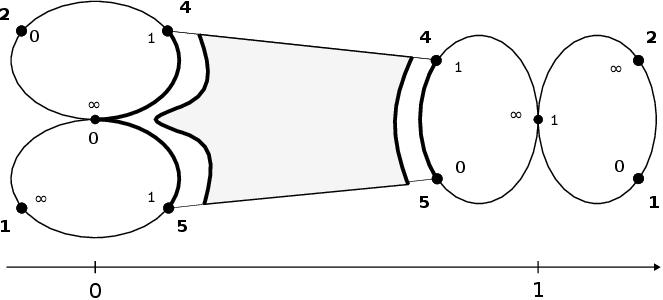}
\centering
\end{figure}

\bibliographystyle{alpha}
\bibliography{dt.bib}

\end{document}